\numberwithin{equation}{section}
\theoremstyle{plain}
\newtheorem{theorem}{Theorem}[section]
\newtheorem{lemma}[theorem]{Lemma}
\newtheorem{corollary}[theorem]{Corollary}
\theoremstyle{definition}
\newtheorem{definition}[theorem]{Definition}
\newtheorem{case[theorem]}{Case}
\theoremstyle{remark}
\newtheorem{remark}[theorem]{Remark}
\numberwithin{equation}{section}
	\newcommand{\abss}[1]{||#1||}
\begin{document}

\title{Cycles of arbitrary length in distance graphs on ${\Bbb F}_q^d$}


\author{A. Iosevich, G. Jardine and B. McDonald}

\date{today}

\address{Department of Mathematics, University of Rochester, Rochester, NY}

\email{iosevich@math.rochester.edu}
\email{gjardine@ur.rochester.edu}
\email{bmcdon11@ur.rochester.edu}

\thanks{The research of the first listed author was partially supported by the National Science Foundation grant no. HDR TRIPODS - 1934962.}

\begin{abstract} For $E \subset {\Bbb F}_q^d$, $d \ge 2$, where ${\Bbb F}_q$ is the finite field with $q$ elements, we consider the distance graph ${\mathcal G}^{dist}_t(E)$, $t \not=0$, where the vertices are the elements of $E$, and two vertices $x$, $y$ are connected by an edge if $||x-y|| \equiv {(x_1-y_1)}^2+\dots+{(x_d-y_d)}^2=t$. We prove that if $|E| \ge C_k q^{\frac{d+2}{2}}$, then ${\mathcal G}^{dist}_t(E)$ contains a statistically correct number of cycles of length $k$. We are also going to consider the dot-product graph ${\mathcal G}^{prod}_t(E)$, $t \not=0$, where the vertices are the elements of $E$, and two vertices $x$, $y$ are connected by an edge if $x \cdot y \equiv x_1y_1+\dots+x_dy_d=t$. We obtain similar results in this case using more sophisticated methods necessitated by the fact that the function $x \cdot y$ is not translation invariant. The exponent $\frac{d+2}{2}$ is improved for sufficiently long cycles. 
\end{abstract}

\maketitle

\tableofcontents

\section{Introduction}

\vskip.125in

Let ${\Bbb F}_q$ denote the finite field with $q$ elements, and let ${\Bbb F}_q^d$ denote the $d$-dimensional vector space over this field. Given $E \subset {\Bbb F}_q^d$, $d \ge 2$, define the distance graph ${\mathcal G}^{dist}_t(E)$, $t \not=0$ by letting the points in $E$ be vertices and connecting two vertices $x$ and $y$ by an edge if 
$$ ||x-y|| \equiv \sum_{i=1}^d {(x_i-y_i)}^2=t.$$

Similarly, define the dot product graph ${\mathcal G}^{prod}_t(E)$, $t \not=0$ by letting the points in $E$ be vertices and connecting two vertices $x$ and $y$ by an edge if 
$$ x \cdot y \equiv \sum_{i=1}^d x_iy_i=t.$$ 

Considerable progress has been achieved in the study of ${\mathcal G}^{dist}_t(E)$ and ${\mathcal G}^{prod}_t(E)$ over the years. See, for example, \cite{BCCHIP}, \cite{BHIPR17}, \cite{BIP12}, \cite{CEHIK12}, \cite{CHIKR}, \cite{HI08}, \cite{HIKR11}, \cite{IP19}, \cite{IR07},  \cite{PPV17}, \cite{V11}, and others. In \cite{IR07}, the first listed author and Rudnev proved that if $E \subset {\Bbb F}_q^d$, $d \ge 2$, and $t \not=0$, then 
\begin{equation} \label{IRidentity} |\{(x,y) \in E \times E: ||x-y||=t \}|={|E|}^2q^{-1}+D(E), \end{equation} where 
\begin{equation} \label{distanceremainderestimate} |D(E)| \leq 2q^{\frac{d-1}{2}}|E|. \end{equation}

In particular, this implies that if $t \not=0$ and $|E|$ is larger than $4q^{\frac{d+1}{2}}$, then the number of edges in ${\mathcal G}^{dist}_t(E)$ is at least $\frac{1}{2}{|E|}^2q^{-1}$, and as the size of $|E|$ increases, the number of edges approaches ${|E|}^2q^{-1}$. 

In \cite{HI08}, Derrick Hart and the first listed author proved that if $t \not=0$ and $E \subset {\Bbb F}_q^d$, then 
\begin{equation} \label{HIidentity} |\{(x,y) \in E \times E: x \cdot y=t \}|={|E|}^2q^{-1}+R(E), \end{equation} where 
\begin{equation} \label{dotproductremainderestimate} |R(E)| \leq q^{\frac{d-1}{2}}|E|. \end{equation} 

Once again, this means that the number of edges in ${\mathcal G}_t^{prod}(E)$ approaches ${|E|}^2q^{-1}$ as $|E|$ increases above $q^{\frac{d+1}{2}}$. 

\begin{definition}
Let $C_n^{dist}$ be the number of cycles of length $n$ in $\mathcal{G}_t^{dist}$, i.e.
\begin{align*}
C_n^{dist}=\left|\{(x_1,x_2,...,x_n)\in E^n: ||x_1-x_2||=||x_2-x_3||=\cdots =||x_n-x_1||\}\right|
\end{align*}

 and similarly let $C_n^{prod}$ be the number of cycles of length $n$ in $\mathcal{G}_t^{prod}$.
\end{definition}
Our main results are to estimate $C_n^{dist}$, $C_n^{prod}$.  Heuristically since there are $|E|^n$ $n$-tuples of vertices in $\mathcal{G}_t^{dist}$ and $\mathcal{G}_t^{prod}$, and a randomly chosen pair of points in $\mathbb{F}_q^d$ will have distance, dot-product, respectively, $t$ with probability $\sim 1/q$, we expect that $C_n^{prod}\sim q^{-n}|E|^n$ when $E$ is large.  This is made precise by the following theorem.
\begin{theorem}\label{main}
Let
\begin{align*}
\gamma=\left\{
\begin{array}{ll}
-1 & :d=2 \\
-\frac{d-2}{2} & :d\geq 3
\end{array}
\right.
\end{align*}
If
\begin{align*}
12q^{\gamma}+8\frac{q^{d+2}}{|E|^2}+\left(24+12\left\lfloor \frac{n}{2}\right\rfloor\right)\frac{q^{\frac{d+1}{2}}}{|E|}\leq 1,
\end{align*}
then for $n\geq 6$,
\begin{align*}
\left|C_n^{dist}-\frac{|E|^n}{q^n}\right|
\leq \frac{|E|^n}{q^n}\left(12q^{\gamma}+8\frac{q^{d+1}}{|E|^2}+\left(24+12\left\lfloor \frac{n}{2}\right\rfloor\right)\frac{q^{\frac{d+1}{2}}}{|E|}\right)
\end{align*}
Moreover,
\begin{align*}
\left|C_4^{dist}-\frac{|E|^4}{q^4}\right|
&\leq \frac{|E|^4}{q^4}\left(12q^{\gamma}+8\frac{q^{d+2}}{|E|^2}+28\frac{q^{\frac{d+1}{2}}}{|E|}\right) \\
\left|C_5^{dist}-\frac{|E|^5}{q^5}\right|
&\leq \frac{|E|^5}{q^5}\left(12q^{\gamma}+8\frac{q^{\frac{2d+3}{2}}}{|E|^2}+32\frac{q^{\frac{d+1}{2}}}{|E|}\right)
\end{align*}
The same is true for $C_n^{prod}$.
\end{theorem}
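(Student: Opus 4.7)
The plan is to reduce the cycle count to a walk-count problem and iterate the Iosevich--Rudnev estimate \eqref{IRidentity}--\eqref{distanceremainderestimate}. For integers $m \geq 1$ and $x,y \in E$, let
\[
P_m(x,y) = \#\bigl\{(z_1,\dots,z_{m-1}) \in E^{m-1} : \|x-z_1\| = \|z_1-z_2\| = \cdots = \|z_{m-1}-y\| = t\bigr\}
\]
be the number of walks of length $m$ in $\mathcal{G}_t^{dist}$ from $x$ to $y$. Splitting a cycle at two antipodal vertices (for $n = 2m$) or at a vertex and the opposite edge (for $n = 2m+1$) yields the combinatorial identities
\[
C_{2m}^{dist} = \sum_{x,y \in E} P_m(x,y)^2, \qquad C_{2m+1}^{dist} = \sum_{\substack{x,y,z \in E \\ \|y-z\|=t}} P_m(x,y)\,P_m(x,z),
\]
reducing the problem to estimating first and second moments of $P_m$.

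Next I would estimate $P_m$ inductively via the recursion $P_m(x,y) = \sum_{z \in E,\ \|z-y\| = t} P_{m-1}(x,z)$. Applying \eqref{IRidentity} in the variable $z$ produces, for each fixed $x,y$, a main term equal to $\frac{|E|}{q}$ times an averaged version of $P_{m-1}$ plus an error controlled by $q^{(d-1)/2}$ times the $\ell^1$-mass of $P_{m-1}(x,\cdot)$. Peeling off the main term at every stage and propagating only the $L^2$ fluctuation to the next step keeps the residuals linear in $m$, yielding
\[
\sum_{x,y \in E} P_m(x,y)^2 = \frac{|E|^{2m}}{q^{2m}} + O\!\left(m\,\frac{q^{(d+1)/2}}{|E|}\cdot\frac{|E|^{2m}}{q^{2m}}\right),
\]
which accounts for the $(24 + 12\lfloor n/2\rfloor)q^{(d+1)/2}/|E|$ contribution in Theorem~\ref{main}.

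The remaining terms $12q^{\gamma}$ and $8q^{d+2}/|E|^2$ come from a direct Fourier expansion of one factor of the indicator, $\mathbf{1}[\|x-y\|=t] = q^{-1}\sum_{s}\chi(s(\|x-y\|-t))$, where $\chi$ is a nontrivial additive character on $\mathbb{F}_q$. The trivial character gives the $|E|^n/q^n$ main term, while squaring the nontrivial contribution produces quadratic Gauss sums of order $q^{d/2}$ in dimension $d=2$ and of order $q^{(d-1)/2}$ in dimension $d \geq 3$; this is the source of the dichotomy in $\gamma$. The $q^{d+2}/|E|^2$ factor is the diagonal contribution in the squared character sum, and can be sharpened for $n=4,5$ through more careful pairing of the characters.

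For the dot-product graph the same outline works with \eqref{HIidentity}--\eqref{dotproductremainderestimate} in place of the distance estimate; the extra obstacle is that $x \cdot y$ is not translation invariant, so the path count cannot be turned into a convolution and the bilinear form must be carried through the Fourier expansion directly, requiring more careful bookkeeping for the resulting mixed character sums. The principal difficulty in the whole argument is the induction on $P_m$: naively iterating Iosevich--Rudnev $n$ times produces $O(2^n)$ cross terms of varying sizes, so obtaining an error only \emph{linear} in $n$ requires systematic main-term extraction at each step and propagation of only the $L^2$ fluctuation to the next stage.
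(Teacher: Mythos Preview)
Your decomposition $C_{2m}=\sum_{x,y}P_m(x,y)^2$ is correct and appears in the paper as the identity $\|P_m\|_2^2=C_{2m}$, but from that point on your approach diverges from the paper's, and there is a genuine gap.

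The paper does \emph{not} iterate the one-edge recursion $P_m(x,y)=\sum_{\|z-y\|=t}P_{m-1}(x,z)$. Instead it introduces a bilinear form
\[
\mathcal T_t(f,g)=\sum_{\|x-z\|=\|y-w\|=t}f(x,y)\,g(z,w)
\]
that attaches \emph{two} edges at once, so that $C_n=\mathcal T_t(P_k,P_\ell)$ with $k+\ell=n-2$, and proves a functional estimate (Theorems~\ref{distance cycles theorem} and~\ref{functional}) of the shape
\[
\Bigl|\mathcal T_t(f,g)-q^{-2}\|f\|_1\|g\|_1\Bigr|\le 3q^{\gamma-2}\|f\|_1\|g\|_1+4q^{d-1}\|f\|_2\|g\|_2+4q^{\frac{d-3}{2}}\bigl(\|F\|_2\|G\|_2+\|F'\|_2\|G'\|_2\bigr).
\]
The three error terms here are exactly the three terms in Theorem~\ref{main}: the $q^\gamma$ term comes from $|S_t|^2/q^{2d}-q^{-2}$ (not from Gauss sums on a single indicator as you suggest), the $q^{d+1}/|E|^2$ term comes from $\|P_k\|_2^2=C_{2k}$ and is handled by induction on $n$ in steps of two, and the linear-in-$n$ term comes from $\|F\|_2^2=\mathcal P_{2k}$ and the path-count theorem.

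Your single-edge iteration has a concrete problem. For fixed $x,y$, the Iosevich--Rudnev bound applied to $\sum_{\|z-y\|=t}E(z)P_{m-1}(x,z)$ has error $O\!\bigl(q^{(d-1)/2}\|P_{m-1}(x,\cdot)\|_2\bigr)$, an $\ell^2$ norm and not an $\ell^1$ norm. Squaring this pointwise bound and summing over $y\in E$ produces $|E|\cdot q^{d-1}\sum_x\|P_{m-1}(x,\cdot)\|_2^2=|E|\,q^{d-1}C_{2m-2}$, which carries a spurious factor of $|E|$ because the errors at different $y$ are highly correlated. Removing that correlation is precisely what the two-variable Fourier decomposition in the proof of Theorem~\ref{distance cycles theorem} accomplishes; without an analogue of that bilinear estimate, your scheme does not reach the stated bounds. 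Your attribution of the $8q^{d+2}/|E|^2$ term to a ``diagonal contribution in the squared character sum'' is also off: in the paper this term is $4q^{d-1}\|P_k\|_2\|P_\ell\|_2$, and for $n=4$ one has $\|P_1\|_2^2=\mathcal P_1\le 2|E|^2/q$, giving $8q^{d+2}/|E|^2$ directly, while for $n\ge 6$ the inductive bound $C_{2k}\le 2|E|^{2k}/q^{2k}$ improves this to $8q^{d+1}/|E|^2$.
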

This theorem says that if $|E|$ is much bigger than $q^{\frac{d+2}{2}}$ for $q$ sufficiently large, then $C_n^{dist},C_n^{prod}$ are very close to $\frac{|E|^n}{q^n}$.  For the case $n=4$, we cannot get a nontrivial result without the restriction on the size of $E$.  However, for larger $n$ we can accept a weaker restriction on the size of $E$, at the cost of a weaker error term.  The techniques for proving these theorems are essentially the same.
\begin{theorem}\label{main2}
For $n\geq 5$ and $q$ sufficiently large,
\begin{align*}
\left|C_n^{dist}-\frac{|E|^n}{q^n}\right|
\leq \left(36+80\cdot 6^{\left \lfloor \frac{n}{2}\right\rfloor-2}+12\left\lfloor \frac{n}{2}\right\rfloor\right)\frac{|E|^n}{q^n}q^{-\left(\frac{n}{2}-1\right)\delta}
\end{align*}
whenever
\begin{align*}
|E|\geq \left\{\begin{array}{ll}
q^{\frac{1}{2}\left(d+2-\frac{k-2}{k-1}+\delta\right)} & : n=2k, \ \text{even} \\
q^{\frac{1}{2}\left(d+2-\frac{2k-3}{2k-1}+\delta\right)} & :n=2k+1 \ \text{odd}
\end{array}
\right.
\end{align*}
where 
\begin{align*}
0<\delta<\frac{1}{2\left\lfloor\frac{n}{2}\right\rfloor^2}
\end{align*}
\end{theorem}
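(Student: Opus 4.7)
The plan is to recast the counting as a spectral graph theory problem. Let $M$ be the $|E|\times|E|$ real symmetric adjacency matrix of $\mathcal{G}_t^{dist}(E)$, with real eigenvalues $\lambda_0\ge|\lambda_1|\ge\dots\ge|\lambda_{|E|-1}|$. Because the tuples counted by $C_n^{dist}$ are exactly the closed walks of length $n$ in this graph,
\begin{align*}
C_n^{dist}\;=\;\mathrm{tr}(M^n)\;=\;\sum_i\lambda_i^n.
\end{align*}
The strategy is to show that $M$ is quasi-random in the sense that $\lambda_0\approx|E|/q$ while $|\lambda_i|\le 2q^{(d-1)/2}$ for $i\ge 1$, and then to read the theorem off the $n$-th moment of the spectrum.

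First I would establish the spectral gap. Testing the Rayleigh quotient on $\mathbf{1}_E/\sqrt{|E|}$ and invoking (\ref{IRidentity})--(\ref{distanceremainderestimate}) yields $\lambda_0=|E|/q+O(q^{(d-1)/2})$. For the remaining eigenvalues, expand the sphere indicator $\sigma_t(z)=\mathbf{1}_{||z||=t}$ in Fourier; its coefficients satisfy $|\widehat{\sigma}_t(\xi)|\le 2q^{(d-1)/2}$ for $\xi\ne 0$ (the Sali\'e/Kloosterman estimate underpinning Iosevich--Rudnev). For any unit vector $v$ supported in $E$ with $v\perp\mathbf{1}_E$, the extension $w=v\mathbf{1}_E$ satisfies $\widehat{w}(0)=\langle v,\mathbf{1}_E\rangle=0$, so Plancherel gives
\begin{align*}
|\langle v,Mv\rangle|\;=\;q^{-d}\Bigl|\sum_{\xi\ne 0}\widehat{\sigma}_t(\xi)\,|\widehat{w}(\xi)|^2\Bigr|\;\le\;2q^{(d-1)/2}\|v\|_2^2\;=\;2q^{(d-1)/2},
\end{align*}
and the min-max principle forces $|\lambda_i|\le 2q^{(d-1)/2}$ for every $i\ge 1$.

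With this in hand, I would split $C_n^{dist}=\lambda_0^n+\sum_{i\ge 1}\lambda_i^n$ and estimate each piece. The dominant term $\lambda_0^n$ equals $|E|^n/q^n$ up to a relative error of order $n\,q^{(d+1)/2}/|E|$ (the source of the $12\lfloor n/2\rfloor$ summand). For the tail, writing $\Lambda:=2q^{(d-1)/2}$ and using $|\lambda_i|\le\Lambda$ for $i\ge 1$,
\begin{align*}
\Bigl|\sum_{i\ge 1}\lambda_i^n\Bigr|\;\le\;\Lambda^{n-2}\sum_{i\ge 1}\lambda_i^2\;\le\;\Lambda^{n-2}\,\mathrm{tr}(M^2),
\end{align*}
which works for both parities (for odd $n$ one factors a single $|\lambda_i|$ out to reduce to an even exponent). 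A second application of Iosevich--Rudnev gives $\mathrm{tr}(M^2)=|E|^2/q+O(q^{(d-1)/2}|E|)$, so the tail is of relative size $q^{(d-1)(n-2)/2+n-1}/|E|^{n-2}$. Elementary algebra now shows this is $\le q^{-(n/2-1)\delta}$ precisely when $|E|$ meets the theorem's threshold: for $n=2k$ the inequality $|E|\ge q^{(d+1)/2+1/(2(k-1))+\delta/2}$ rearranges to the stated $|E|\ge q^{(d+2-(k-2)/(k-1)+\delta)/2}$, and the odd case is strictly analogous with $1/(2(k-1))$ replaced by $1/(2k-1)$. The constraint $\delta<1/(2\lfloor n/2\rfloor^2)$ is exactly what is needed to guarantee that the main-term error $n\,q^{(d+1)/2}/|E|$ is also absorbed into the target $q^{-(n/2-1)\delta}$.

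The main technical obstacle will be the spectral-gap estimate for the restricted matrix $M$: the Fourier/Plancherel argument gives the correct order $q^{(d-1)/2}$, but tracking the compounded constants through the $\Lambda^{n-2}$ bound is where the geometric-in-$n$ factor $80\cdot 6^{\lfloor n/2\rfloor-2}$ in the stated error bound presumably originates. A secondary concern is that $C_n^{dist}$ counts closed walks rather than simple cycles, so the trace formulation automatically includes degenerate configurations with repeated vertices; these are of lower order in $|E|$ and are absorbed into the error, but in the odd-$n$ tail one must bound via absolute values $|\lambda_i|$ rather than rely on any signed cancellation among eigenvalues.
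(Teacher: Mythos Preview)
Your spectral route is correct and genuinely different from the paper's argument, and it recovers exactly the same threshold exponents. The paper never writes $C_n^{dist}=\mathrm{tr}(M^n)$; instead it establishes a bilinear Fourier estimate (Theorem~\ref{distance cycles theorem}) for sums $\sum_{\|x-z\|=\|y-w\|=t}f(x,y)g(z,w)$, specialises to the path kernels $f=P_k^*$, $g=P_\ell^*$, and controls the resulting $\|f\|_2\|g\|_2=\sqrt{C_{2k}^*C_{2\ell}^*}$ term via an inductive upper bound (Lemma~\ref{bound}) of the form $C_{2k}^*\le 10\cdot 6^{k-2}\,(|E|/q)^{2k}q^{\psi_k(\alpha)}$. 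That two-step induction, not anything like your $\Lambda^{n-2}$, is where the constant $80\cdot 6^{\lfloor n/2\rfloor-2}$ comes from; your argument would instead produce a constant of order $2^{n}$, so you end up proving a slightly sharper inequality than the one stated. What the paper's approach buys is uniformity: Theorem~\ref{functional} handles the dot-product graph $\mathcal{G}_t^{prod}$ with the same machinery, whereas your Plancherel step leans on the translation invariance of $\|x-y\|$ and would need a separate spectral analysis of the (non-convolution) hyperplane-incidence operator to cover $x\cdot y=t$.

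Two small repairs. First, the min--max step as written only bounds the second-largest eigenvalue; to get $|\lambda_i|\le 2q^{(d-1)/2}$ for \emph{all} $i\ge 1$, including the most negative one, it is cleanest to decompose $M=|S_t|q^{-d}J_E+M'$ with $J_E$ the all-ones matrix on $E$ and $\|M'\|_{\mathrm{op}}\le 2q^{(d-1)/2}$, then invoke Weyl's inequality (this also gives $\lambda_0=|S_t|q^{-d}|E|+O(q^{(d-1)/2})$ cleanly). Second, your concern about closed walks versus simple cycles is unnecessary here: the paper's $C_n^{dist}$ is \emph{defined} as the closed-walk count, so $\mathrm{tr}(M^n)$ is exactly the right object; non-degeneracy is treated separately in Theorem~\ref{nondegenerate}.
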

Our final variant of Theorem \ref{main} is to count non-degenerate cycles.
\begin{definition}
Let $\mathcal{N}_n^{dist}$ resp. $\mathcal{N}_n^{prod}$ be the number of non-degenerate cycles in $\mathcal{G}_t^{dist}$, $\mathcal{G}_t^{prod}$, respectively, i.e. cycles $x_1,...,x_n$ with $x_i\neq x_j$ when $i\neq j$.  
\end{definition}
\begin{theorem}\label{nondegenerate}
For $n\geq 4$ and $q$ sufficiently large, if
\begin{align*}
|E|\geq \left\{\begin{array}{ll}
q^{\frac{1}{2}\left(d+2-\frac{k-2}{k-1}+\delta\right)} & :n=2k \\
q^{\frac{1}{2}\left(d+2-\frac{2k-3}{2k-1}+\delta\right)} & :n=2k+1
\end{array}
\right.
\end{align*}
then
\begin{align*}
\left|\mathcal{N}_n^{dist}-\frac{|E|^n}{q^n}\right|\leq \frac{|E|^n}{q^n}\left(K_nq^{-\left(\frac{n}{2}-1\right)\delta}+2nq^{-\frac{2}{n-1}}+c_nq^{-\frac{d-3}{2}-\varepsilon}\right)
\end{align*}
where $K_4=48$, and 
\begin{align*}
K_n&=36+80\cdot 6^{\left\lfloor\frac{n}{2}\right\rfloor-2}+12\left\lfloor\frac{n}{2}\right\rfloor \ \ \text{for} \ n\geq 5 \\
c_n&=(n-1)^{n-3}\cdot 2^{\binom{n-1}{2}-n+3} \\
\varepsilon&=\left\{\begin{array}{ll}
1-\frac{k-2}{k-1}+\delta & : n=2k \\
1-\frac{2k-3}{2k-1}+\delta & :n=2k+1
\end{array}
\right.
\end{align*}
The same estimates hold for $\mathcal{N}_n^{prod}$.
\end{theorem}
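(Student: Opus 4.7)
The plan is to write $\mathcal{N}_n^{dist} = C_n^{dist} - D_n$, where $D_n$ denotes the number of \emph{degenerate} cycles: closed $n$-tuples $(x_1,\ldots,x_n) \in E^n$ satisfying all cyclic distance equations but with $x_i = x_j$ for some $i \ne j$. Theorem~\ref{main2} directly produces the $K_n q^{-(n/2-1)\delta}\,|E|^n/q^n$ term in the error, reducing the problem to showing
\[
D_n \;\le\; \frac{|E|^n}{q^n}\bigl(2n q^{-2/(n-1)} + c_n q^{-(d-3)/2-\varepsilon}\bigr)
\]
under the hypothesis on $|E|$.

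I would parameterize each degenerate cycle by the partition $\Pi$ of $\{1,\ldots,n\}$ defined by $i \sim j \iff x_i = x_j$. Since $t \ne 0$, no block of $\Pi$ may contain two cyclically adjacent indices, so contracting yields a connected multigraph $H_\Pi$ on $m := |\Pi| < n$ vertices with the $n$ cyclic edges. The number of degenerate cycles realizing $\Pi$ equals the number of maps $V(H_\Pi) \to E$ respecting the distance condition on each edge of $H_\Pi$. To estimate this for fixed $\Pi$, I would pick a spanning tree $T \subset H_\Pi$ and iterate identity~\eqref{IRidentity} with error~\eqref{distanceremainderestimate} along $T$: each tree edge contributes a main factor $|E|/q$ with multiplicative error $O(q^{(d+1)/2}/|E|)$, giving a main count $|E|^m/q^{m-1}$. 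The $n - (m-1)$ non-tree edges impose further approximate $1/q$ constraints, handled either by reapplying the distance estimate or, when they create short subcycles inside $H_\Pi$ on which that estimate degenerates, by a direct sphere-intersection bound which is responsible for the $2n q^{-2/(n-1)}$ term.

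Summing over $\Pi$, the dominant contribution comes from the shallowest degeneracy $m = n-1$ (one non-adjacent identified pair, at most $\binom{n}{2}$ patterns). For deeper degeneracies $m \le n-2$, I would first choose a spanning tree of $H_\Pi$ (Cayley's formula gives at most $(n-1)^{n-3}$) and then a subset of non-tree cycle edges (at most $2^{\binom{n-1}{2}-n+3}$ choices), reproducing the exact constant $c_n = (n-1)^{n-3}\cdot 2^{\binom{n-1}{2}-n+3}$; combined with the hypothesis on $|E|$, the sum then yields the $c_n q^{-(d-3)/2-\varepsilon}\,|E|^n/q^n$ term. The main obstacle is the error bookkeeping: each tree edge multiplies the estimate by $1 + O(q^{(d+1)/2}/|E|)$, and after iterating up to $n$ times and summing over combinatorially many $\Pi$, one must verify that the total error lands inside the stated bound. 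Achieving the precise exponent $-(d-3)/2-\varepsilon$ (rather than a cruder $-(d-1)/2$) requires using the full strength of the improved hypothesis $|E|\ge q^{(d+2)/2 - (k-2)/(2(k-1)) + \delta/2}$ for long cycles, and propagating this saving carefully through the spanning-tree estimate.
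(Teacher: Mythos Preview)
Your overall architecture---write $\mathcal N_n = C_n - D_n$, invoke Corollary~\ref{mainthmcor} for $C_n$, and bound $D_n$ by summing spanning-tree counts with the combinatorics coming from Cayley's formula---is exactly the paper's. But there is a genuine gap in how you propose to bound the tree counts, and a misidentification of where the $2nq^{-2/(n-1)}$ term comes from.

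The problematic step is ``iterate identity~\eqref{IRidentity} along $T$: each tree edge contributes a main factor $|E|/q$ with multiplicative error $O(q^{(d+1)/2}/|E|)$.'' When you peel a leaf $v$ from $T$ with neighbour $w$, the functional version of~\eqref{IRidentity} gives
\[
n_T \;=\; q^{-1}|E|\,\|T'\|_1 + R, \qquad |R|\le 2q^{(d-1)/2}\,|E|^{1/2}\,\|T'\|_2,
\]
where $T'(x)$ counts embeddings of the pruned tree sending $w\mapsto x$. For the multiplicative error you claim, you need $\|T'\|_2^2 \lesssim |E|^{2r-1}/q^{2r-2}$. But $\|T'\|_2^2$ equals the count for the tree obtained by gluing two copies of $T'$ at $w$, which has \emph{more} vertices than $T$; a naive induction is therefore circular. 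Without further input nothing prevents a few high-degree vertices from making $\|T'\|_2$ much larger than expected. The paper resolves this not by bookkeeping but by first passing to the subset
\[
E^* \;=\; \Bigl\{x\in E : \textstyle\sum_y E(y)S_t(x-y)\le \lambda\,|E|/q\Bigr\},
\]
so that in $\mathcal G_t^{dist}(E^*)$ every vertex has degree at most $\lambda|E|/q$, whence $T'(x)\le (\lambda|E|/q)^{r-1}$ pointwise and $\|T'\|_2^2\le \|T'\|_\infty\|T'\|_1$ is controlled directly (Lemma~\ref{tree lemma}). This regularisation is the missing idea.

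Relatedly, the term $2nq^{-2/(n-1)}$ does not arise from sphere intersections for short subcycles of $H_\Pi$. It is the price of regularising: Chebyshev gives $|E\setminus E^*|\le 2|E|/\lambda$, and the discrepancy $q^{-n}\bigl(|E|^n-|E^*|^n\bigr)\le 2n\lambda^{-1}|E|^n/q^n$ with an appropriate choice of $\lambda$ produces exactly that term. Once one works inside $E^*$, the degenerate-cycle bound is simply $\mathcal D_n^*(E^*)\le c_n|E|^{n-1}/q^{n-2}$, and the exponent $-(d-3)/2-\varepsilon$ then drops out immediately from $q^2/|E|\le q^{-(d-3)/2-\varepsilon}$, with no further propagation of savings needed.
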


\vskip.125in 

\begin{remark} A close variant of Theorem \ref{nondegenerate} was proved in \cite{IP19} in the context of the distance graph with a weaker exponent, namely $\frac{d+3}{2}$ in place of $\frac{d+2}{2}$ above. Moreover, we get a better exponent than $\frac{d+2}{2}$ above for long cycles. The main result in \cite{IP19} handles more general configurations and this raises the question of whether the techniques of this paper can be used obtain results for general configurations. We shall address this issue in the sequel. \end{remark} 

\vskip.125in 

The proof of Theorem \ref{main} requires an estimate for the number of paths of a given length in $\mathcal{G}_t^{dist}$, $\mathcal{G}_t^{prod}$. This has been done for distances in \cite{BCCHIP}, and we will cover the dot-product case in section \ref{paths}.  See also \cite{Sht18}, where an improvement on the length of the possible paths is obtained. To count non-degenerate cycles we will also need to consider the number of embeddings of an arbitrary tree $T$ into $\mathcal{G}_t^{dist}$,$\mathcal{G}_t^{prod}$, which is handled in section \ref{degenerate}.  The following result is sufficient to show existence of non-degenerate cycles for $d\geq 3$ under the hypotheses of Theorem \ref{nondegenerate}
\begin{theorem}\label{tree counting}
Fix a tree $T$ with $r+1$ vertices and hence $r$ edges.  For $\epsilon>0$, if $|E|>q^{\frac{d+1}{2}+\varepsilon}$ then there is a subset $E'\subseteq E$ with 
\begin{align*}
|E\setminus E'|\leq 2q^{-\frac{2\varepsilon}{r+1}}|E|,
\end{align*}
and if $n_T$ is the number of embeddings of $T$ into $\mathcal{G}_t^{dist}(E')$, then
\begin{align*}
\left|n_T-\frac{|E|^{r+1}}{q^r}\right|\leq 8\frac{|E|^{r+1}}{q^{r}}q^{-\frac{2\epsilon}{r+1}}
\end{align*}
The same is true replacing $\mathcal{G}_t^{dist}$ with $\mathcal{G}_t^{prod}$. 
\end{theorem}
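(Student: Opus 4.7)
My plan is to induct on $r$, the number of edges of $T$. The key structural observation is that any tree has a leaf: choose $\ell \in T$ adjacent to a vertex $v$ and set $T' = T\setminus\{\ell\}$, so that embeddings of $T$ into $\mathcal{G}_t^{dist}(E')$ decompose as embeddings of $T'$ together with the choice of $\phi(\ell) \in E'$ satisfying $||\phi(\ell) - \phi(v)|| = t$. Writing $\nu_{E'}(x) = |\{y \in E' : ||x-y||=t\}|$, this gives
\[
n_T \;=\; \sum_{\phi \colon T' \hookrightarrow \mathcal{G}_t^{dist}(E')} \nu_{E'}(\phi(v)).
\]
Provided $\nu_{E'}(x)$ is approximately $|E|/q$ for every $x \in E'$, the right-hand side equals $(|E|/q)(1+O(\eta))\,n_{T'}$, and the inductive hypothesis $n_{T'} \approx |E|^r/q^{r-1}$ closes the argument.

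For the cleaning I would prove a Fourier-analytic variance estimate. Writing $\nu(x) = 1_E * S_t(x)$ with $S_t$ the indicator of the $t$-sphere, Plancherel together with the Gauss-sum bound $|\widehat{S_t}(m)|^2 = O(q^{d-1})$ for $m \neq 0$ yields
\[
\sum_{x \in \mathbb{F}_q^d}\left(\nu(x) - \tfrac{|E|}{q}\right)^2 \;\leq\; C\, q^{d-1}|E|.
\]
Setting $\eta = q^{-2\varepsilon/(r+1)}$ and applying Chebyshev's inequality, the bad set $B = \{x \in E : |\nu(x) - |E|/q| > \eta|E|/q\}$ has $|B| \leq C q^{d+1}/(\eta^2 |E|)$, and the hypothesis $|E| \geq q^{(d+1)/2+\varepsilon}$ makes this at most $2q^{-2\varepsilon/(r+1)}|E|$. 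I would put $E' = E\setminus B$; since $\nu_{E'}(x) \in [\nu(x) - |B|,\,\nu(x)]$, the near-constancy transfers from $\nu$ to $\nu_{E'}$ up to a controlled correction.

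The base case $r = 1$ is immediate from \eqref{IRidentity} with $E' = E$ (no cleaning needed), and the inductive step multiplies in the near-constant factor $(|E|/q)(1+O(\eta))$ from above. The argument for $\mathcal{G}_t^{prod}$ runs identically, using \eqref{HIidentity} and the Gauss-sum bound for the character sum of the level set $\{y : x \cdot y = t\}$ in place of their distance counterparts.

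The main technical point I expect to be fiddly is controlling how the multiplicative errors $(1+O(\eta))^r$ telescope across the $r$ inductive steps while still matching the single-factor bound $8\,q^{-2\varepsilon/(r+1)}$ in the conclusion. The exponent $2\varepsilon/(r+1)$ is chosen to balance the cleaning threshold $\eta$ (which simultaneously governs $|E\setminus E'|$ and the per-step error) against the accumulated error over $r$ steps. A related subtlety is that passing from $\nu$ to $\nu_{E'}$ introduces an additive correction bounded by $|B|$, and one must check that this is compatible with the allowable additive error $\eta|E|/q$ at each step, which is where the hypothesis $|E| \geq q^{(d+1)/2+\varepsilon}$ is used most delicately.
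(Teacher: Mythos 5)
Your skeleton (leaf removal, induction on the number of edges, cleaning $E$ by a degree condition, base case $r=1$ from (\ref{IRidentity})) matches the paper's, but the engine of your inductive step is different, and it has a genuine gap. The paper's Lemma \ref{tree lemma} removes only vertices of \emph{too large} degree, $E^*=\{x\in E:\ E\ast S_t(x)\le\lambda|E|/q\}$ with $\lambda=q^{2\varepsilon/(r+1)}$; a one-sided upper bound survives trivially when passing to any subset, and the main term at each leaf-removal step is recovered not from near-constancy of degrees but from the bilinear estimate of Theorem \ref{CHIKR functional} applied to $f(x)=T'(x)$ (the number of embeddings of $T'$ sending $w$ to $x$) and $g=E^*$, whose error $2q^{\frac{d-1}{2}}\|T'\|_2|E^*|^{1/2}$ is then controlled using the degree truncation via $T'(x)\le(\lambda|E|/q)^{r-1}$. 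You instead need the two-sided pointwise statement that $\nu_{E'}(x)=(1+O(\eta))|E|/q$ for \emph{every} $x\in E'$ in order to multiply the factor $(|E|/q)(1+O(\eta))$ through the induction.

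The gap is the transfer of the lower bound from $\nu=\nu_E$ to $\nu_{E'}$. Chebyshev gives $|B|\lesssim q^{d+1}/(\eta^2|E|)$, and with $\eta=q^{-2\varepsilon/(r+1)}$ and $|E|$ near $q^{\frac{d+1}{2}+\varepsilon}$ this is of order $q^{\frac{d+1}{2}-\varepsilon+\frac{4\varepsilon}{r+1}}$, which for small $\varepsilon$ exceeds the typical degree $|E|/q\approx q^{\frac{d-1}{2}+\varepsilon}$ by roughly a factor of $q$. Hence your claimed correction $\nu_{E'}(x)\ge\nu(x)-|B|$ is vacuous in the relevant regime: a surviving vertex may have all of its $\approx|E|/q$ neighbors inside the removed set $B$, so no pointwise lower bound on $\nu_{E'}$ follows and the main term of your inductive step is not justified. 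An aggregate bound on the edges into $B$ does not rescue the step either, because $\sum_{\phi}\nu_{E'}(\phi(v))$ weights vertices by the number of embeddings of $T'$ landing at them, and you have not shown $\phi(v)$ is equidistributed over $E'$ (establishing that is essentially the same difficulty the bilinear estimate is there to bypass). Two smaller points: the induction must be run inside one fixed $E'$, so you need an auxiliary lemma parameterized by the cleaning threshold (as in Lemma \ref{tree lemma}) rather than invoking the theorem for $T'$, whose cleaned set would differ; and multiplying $(1+O(\eta))$ over $r$ steps yields an error of size about $r\eta$ rather than the single factor $8\eta$, so matching the stated constant requires extra care (the paper's own lemma carries a factor $4r$ at this point).
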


\vskip.125in 

We note that in this paper, we obtain the same results for the distance graph and the dot product graph. While the techniques are, at least superficially, somewhat different due to the lack of translation invariance in the dot product setting, it is reasonable to ask whether a general formalism is possible. The underlying edge operator in the distance graph is 
$$ A_tf(x)=\sum_{||x-y||=t} f(y),$$ while the edge operator in the dot product case is 
$$ R_tf(x)=\sum_{x \cdot y=t} f(y) dy.$$ 

The Euclidean variant of $Af(x)$ is 
$$ {\mathcal A}_tf(x)=\int f(x-y) d\sigma(y),$$ where $\sigma$ the surface measure on $S^{d-1}$, the unit sphere. The Euclidean variant of $Rf(x)$ is the classical Radon transform 
$$ {\mathcal R}_tf(x)=\int_{x \cdot y=t} \psi(y) f(y) d\sigma_{x,t}(y),$$ where $\psi$ is a smooth cut-off function and $\sigma_{x.t}$ is the surface measure on $\{y \in {\Bbb R}^d: x \cdot y=t \}$. If $t \not=0$, both ${\mathcal A}_t$ and ${\mathcal R}_t$ map $L^2({\Bbb R}^d)$ to $H^{\frac{d-1}{2}}({\Bbb R}^d)$, where $H^s({\Bbb R}^d)$ is the Sobolev space of $L^2({\Bbb R}^d)$ function with generalized derivative of order $s>0$ in $L^2({\Bbb R}^d$. See, for example, \cite{So93} and the references contained therein. 

A significant amount of progress has been made in the Euclidean setting in studying general configuration problems from the point of view of Sobolev estimate. See, for example, \cite{GIT19}, \cite{GIT20} and \cite{ITU18} for some recent work in this direction. It would be interesting to encode the bounds in the finite field setting using a suitable formalism analogous to their Euclidean counterparts. Both the edge operator $A_t$ and $R_t$, defined above, satisfy the following bounds that we encode as follows. Let 
$$ T^{\phi}_tf(x)=\sum_{\phi(x,y)=t} f(y), $$ where $\phi: {\Bbb F}_q^d \times {\Bbb F}_q^d \to {\Bbb F}_q$, a function. See \cite{PS91} for the description of the continuous analog of this family of operators, introduced by Phong and Stein. 

Note that if $\phi(x,y)=||x-y||$, we recover the operator $A_t$, while if $\phi(\delta,y)=x \cdot y$, we cover $R_t$. Let 
$$ T^{\phi}_{t,0}f(x)=T^{\phi}_tf(x)-q^{-d} \sum_{x \in {\Bbb F}_q^d} T^{\phi}_tf(x),$$ which amounts to stripping $Tf(x)$ of its $0$'th Fourier coefficient. Both $A_t$ and $R_t$ above satisfy the bound 

\begin{equation} \label{finitesobolev} <T^{\phi}_{t,0}f,g> \ \leq \ Cq^{\frac{d-1}{2}} {||f||}_{L^2({\Bbb F}_q^d)} \cdot {||g||}_{L^2({\Bbb F}_q^d)}, \end{equation} where 
$$ {||f||}_{L^2({\Bbb F}_q^d)}^2=\sum_{x \in {\Bbb F}_q^d} {|f(x)|}^2,$$ and the inner product on the left hand side above is the $L^2({\Bbb F}_q^d)$ inner product. 

The estimate (\ref{finitesobolev}) can be viewed as an analog of the $L^2({\Bbb R}^d) \to H^{\frac{d-1}{2}}({\Bbb R}^d)$ bound in the Euclidean case since for any $\phi$ satisfying 
\begin{equation} \label{sizecondition} |\{x \in {\Bbb F}_q^d: \phi(x,y)=t\}|= |\{y \in {\Bbb F}_q^d: \phi(x,y)=t\}| \approx q^{d-1}, \end{equation} the estimate (\ref{finitesobolev}) holds with $q^{\frac{d-1}{2}}$ replaced by $q^{d-1}$. It is reasonable to summarize the above using the following notion. 

\begin{definition} Let $T^{\phi}_t$, $T^{\phi}_{t,0}$ be as above. Suppose that  (\ref{sizecondition}) holds and, in place of (\ref{finitesobolev}) we have 
\begin{equation} \label{finitesobolevgeneral} <T^{\phi}_{t,0}f,g> \ \leq \ Cq^{d-1} q^{-\alpha} {||f||}_{L^2({\Bbb F}_q^d)} \cdot {||g||}_{L^2({\Bbb F}_q^d)} \end{equation} for some $\alpha>0$. 

Then we say that $T^{\phi}_{t,0}$ is smoothing of order $\alpha$. 
\end{definition} 

Given a function $\phi: {\Bbb F}_q^d \times {\Bbb F}_q^d$, as above, and $E \subset {\Bbb F}_q^d$, define ${\mathcal G}_t^{\phi}(E)$ to be the graph where the vertices are given by the 
points of $E$, and two vertices are connected by an edge of $\phi(x,y)=t$. In the sequel, we shall engage in a systematic study of the properties of this graph under the smoothing assumption (\ref{finitesobolevgeneral}) and the size assumption (\ref{sizecondition}) above. 

\vskip.125in 

\subsection{Acknowledgements} This paper is dedicated to Professor Vinogradov's 130th birthday. The authors wish to make it clear that they are celebrating Vinogradov's mathematical legacy. In particular, this submission should not be viewed as an endorsement, in any way, of Vinogradov's political and social views. 

\section{Paths on the Dot Product Graph of $E$}\label{paths}

\vskip.125in

\begin{definition}
Let $\mathcal{P}_k$ be the number of paths of length $k$ on $\mathcal{G}_t^{prod}(E)$.
\end{definition}

\begin{theorem}\label{Chains}
If $|E|>\frac{k}{\log{2}}q^{\frac{d+1}{2}}$, then
\begin{align*}
\left|\mathcal{P}_k-\frac{|E|^{k+1}}{q^k}\right|\leq \frac{k}{\log{2}}q^{\frac{d+1}{2}}\frac{|E|^k}{q^k}
\end{align*}
\end{theorem}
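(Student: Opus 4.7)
The plan is to proceed by induction on $k$, following the approach of \cite{BCCHIP} in the distance-graph setting, with the dot-product analog of the smoothing estimate \eqref{dotproductremainderestimate} as the key input. Writing $R_t f(x) := \sum_{x \cdot y = t} f(y)$, I would first establish the $L^2 \to L^2$ bound
\[
\bigl\| R_t f - q^{-1}\widehat{f}(0)\bigr\|_{L^2(\mathbb{F}_q^d)} \;\lesssim\; q^{(d-1)/2}\,\|f\|_{L^2(\mathbb{F}_q^d)},
\]
where $\widehat{f}(0) := \sum_x f(x)$. This is exactly the $L^2$ companion of \eqref{HIidentity}--\eqref{dotproductremainderestimate}, i.e.\ the dot-product instance of \eqref{finitesobolev}, and follows by expanding $\mathbf{1}_{x\cdot y = t} = q^{-1}\sum_s \chi(s(x\cdot y - t))$, isolating the $s = 0$ term, and applying Plancherel together with the cancellation $\sum_{s \neq 0}\chi(-s(1-r)t) = -1$ (valid when $t \neq 0$ and $r \neq 1$) to the resulting double sum over $s, s'$.

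Next I would let $g_k(x)$ count paths $(x_0, \ldots, x_k) \in E^{k+1}$ with $x_k = x$, so that $g_0 = \mathbf{1}_E$, $g_k = \mathbf{1}_E \cdot R_t g_{k-1}$, and $\mathcal{P}_k = \sum_x g_k(x)$. Applying the smoothing bound to $f = g_{k-1}$ gives $R_t g_{k-1} = \mathcal{P}_{k-1}/q + e_{k-1}$ with $\|e_{k-1}\|_2 \lesssim q^{(d-1)/2}\|g_{k-1}\|_2$. Summing $g_k = \mathbf{1}_E\,(\mathcal{P}_{k-1}/q + e_{k-1})$ against $\mathbf{1}_E$ and invoking Cauchy--Schwarz yields the driving recursion
\[
\mathcal{P}_k \;=\; \tfrac{|E|}{q}\,\mathcal{P}_{k-1} \;+\; O\!\bigl(q^{(d-1)/2}|E|^{1/2}\|g_{k-1}\|_2\bigr),
\]
while taking the $L^2$ norm of the same identity gives the companion estimate
\[
\|g_k\|_2 \;\leq\; \tfrac{\mathcal{P}_{k-1}}{q}\,|E|^{1/2} \;+\; C\,q^{(d-1)/2}\,\|g_{k-1}\|_2,
\]
so I would track the pair $(\mathcal{P}_k, \|g_k\|_2)$ jointly.

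The main obstacle is closing the induction cleanly: each step naively inflates the relative error by a factor $\approx 1 + q^{(d+1)/2}/|E|$, which would be catastrophic over $k$ steps when $|E|$ is near the threshold. I would prove by simultaneous induction that, under the hypothesis $|E| > \tfrac{k}{\log 2}\,q^{(d+1)/2}$, both the normalized deviation $|\mathcal{P}_k - |E|^{k+1}/q^k|\cdot q^k/|E|^{k+1}$ and the normalized norm $\|g_k\|_2 \cdot q^k/|E|^{k+1/2}$ stay bounded by $2$. This is precisely where $\log 2$ enters: the hypothesis forces $k\,\log\!\bigl(1 + q^{(d+1)/2}/|E|\bigr) \leq \log 2$, so the multiplicative amplification over $k$ steps never exceeds a factor of $2$. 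Summing the $k$ additive errors, each contributing $O\bigl(q^{(d+1)/2}\cdot |E|^k/q^k\bigr)$ to $\mathcal{P}_k$, then produces the claimed bound. The base case $k=1$ is \eqref{HIidentity}--\eqref{dotproductremainderestimate}, which is already sharper than the target by the factor $1/\log 2$ and thereby supplies the slack needed to initialize the induction.
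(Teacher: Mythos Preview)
Your approach is correct but genuinely different from the paper's. You add one edge at a time, tracking the pair $(\mathcal{P}_k,\|g_k\|_2)$ through the coupled linear recursion $\mathcal{P}_k=\frac{|E|}{q}\mathcal{P}_{k-1}+O(q^{(d-1)/2}|E|^{1/2}\|g_{k-1}\|_2)$ and $\|g_k\|_2\le \frac{|E|^{1/2}}{q}\mathcal{P}_{k-1}+q^{(d-1)/2}\|g_{k-1}\|_2$; with $\epsilon=q^{(d+1)/2}/|E|$ both normalized quantities are then bounded by $(1+\epsilon)^k\le 2$, and telescoping gives the deviation bound $\bigl((1+\epsilon)^k-1\bigr)|E|^{k+1}/q^k\le \frac{k\epsilon}{\log 2}|E|^{k+1}/q^k$, which is exactly the claim. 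The paper instead uses a \emph{dyadic} recursion: it splits a path of length $2k{+}1$ (resp.\ $2k$) into two half-paths joined by a single edge, obtaining $\mathcal{P}_{2k+1}=q^{-1}\mathcal{P}_k^2+R$ and $\mathcal{P}_{2k}=q^{-1}\mathcal{P}_k\mathcal{P}_{k-1}+R$ with $|R|$ controlled by $q^{(d-1)/2}$ times $\mathcal{P}_{2k}$ or $\sqrt{\mathcal{P}_{2k}\mathcal{P}_{2k-2}}$; from this it first proves the closed-form upper bound $\mathcal{P}_k\le |E|X^k$ with $X=(|E|+q^{(d+1)/2})/q$ and then expands binomially (this is where the paper invokes $(1+\log 2/k)^k\le 2$), and finally proves the lower bound by a separate dyadic induction. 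Your linear scheme is more elementary and closes the upper and lower bounds in one pass, while the paper's doubling decomposition yields the clean product formula $|E|X^k$ and dovetails with the later cycle-counting arguments, where splitting a cycle into two halves via the bilinear operator $\mathcal{T}_t^*(P_k^*,P_\ell^*)$ is the natural move.
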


We present an argument similar to the proof of Theorem 1.1 in \cite{BCCHIP}.  First we state Theorem 2.1 from \cite{CHIKR}:
\begin{theorem}\label{CHIKR functional}
For non-negative functions $f,g$ on $\mathbb{F}_q^d$,
\begin{align*}
\sum_{x\cdot y=t}{f(x)g(y)}=q^{-1}||f||_{1}||g||_{1}+R(t)
\end{align*}
where
\begin{align*}
|R(t)|\leq ||f||_{2}||g||_{2}q^{\frac{d-1}{2}}
\end{align*}
if $t\neq 0$.
\end{theorem}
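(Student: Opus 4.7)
My plan is Fourier analysis on $\mathbb{F}_q^d$: expand $\mathbf{1}_{x\cdot y = t}$ via additive characters, isolate the zeroth frequency as the main term, and bound the remainder by Cauchy--Schwarz combined with a Kloosterman-style cancellation. Fix a non-trivial additive character $\chi$ on $\mathbb{F}_q$, and set $\widehat{f}(m) = \sum_x f(x)\chi(-x\cdot m)$. Using the orthogonality $\mathbf{1}_{z=t} = q^{-1}\sum_s \chi(s(z-t))$, the $s=0$ term produces $q^{-1}\|f\|_1\|g\|_1$, leaving
\begin{align*}
R(t) = \frac{1}{q}\sum_{s\neq 0}\chi(-st)\,I(s),\qquad I(s) = \sum_x f(x)\,\widehat{g}(-sx).
\end{align*}
Substituting $y = -sx$ in $I(s)$ (a bijection for $s\neq 0$), isolating the $y=0$ contribution, swapping summation, and then substituting $u=-1/s$ yields
\begin{align*}
R(t) = -\frac{f(0)\|g\|_1}{q} + \frac{1}{q}\sum_{y\neq 0}\widehat{g}(y)\,G_y(t),\qquad G_y(t) = \sum_{u\neq 0}\chi(t/u)\,f(uy).
\end{align*}

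Next I would apply Cauchy--Schwarz in $y$ together with the Plancherel bound $\sum_y|\widehat g(y)|^2 = q^d\|g\|_2^2$ to reduce the estimate to
\begin{align*}
\sum_{y\neq 0}|G_y(t)|^2 \leq (q-1)\,\|f\|_2^2.
\end{align*}
This inequality is the heart of the proof and is where the $\sqrt{q}$ gain over a naive triangle-inequality bound originates; I expect it to be the main obstacle. To prove it, expand the square, interchange the $y$- and $(u,u')$-sums, and substitute $z=uy$ in the inner sum so that $\sum_{y\neq 0}f(uy)f(u'y) = \sum_{z\neq 0}f(z)f(az)$ with $a=u'/u$. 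Parameterizing by $(u,a)\in(\mathbb{F}_q^*)^2$ gives
\begin{align*}
\sum_{y\neq 0}|G_y(t)|^2 = \sum_{a\neq 0}\Bigl(\sum_{u\neq 0}\chi(t(a-1)/(au))\Bigr)\sum_{z\neq 0}f(z)f(az).
\end{align*}
The diagonal $a=1$ contributes $(q-1)(\|f\|_2^2-f(0)^2)$. For $a\neq 1$ with $t\neq 0$, the inversion $w=1/u$ and the identity $\sum_{w\neq 0}\chi(cw) = -1$ (valid for $c\neq 0$) collapse the inner $u$-sum to $-1$. The off-diagonal contribution is therefore $-\sum_{a\neq 0,1}\sum_{z\neq 0}f(z)f(az)$, which is non-positive because $f\geq 0$; dropping it produces the desired bound.

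Combining the pieces, $q^{-1}\cdot q^{d/2}\|g\|_2\cdot\sqrt{q-1}\,\|f\|_2 \leq q^{(d-1)/2}\|f\|_2\|g\|_2$, and the boundary term $f(0)\|g\|_1/q$ is an $O(q^{(d-2)/2}\|f\|_2\|g\|_2)$ correction (using $f(0)\leq\|f\|_2$ and $\|g\|_1\leq q^{d/2}\|g\|_2$) absorbed into the main estimate. The hypothesis $t\neq 0$ is used at exactly one place -- in the cancellation $\sum_{w\neq 0}\chi(cw)=-1$ for $c\neq 0$ that kills the off-diagonal contribution to $\sum|G_y(t)|^2$; without it, one only gets the weaker $q^{d/2}$ bound.
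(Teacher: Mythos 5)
The paper itself gives no proof of this statement -- it is quoted as Theorem 2.1 of \cite{CHIKR} -- so there is no in-paper argument to compare against. Your route is the standard one behind that result: expand the constraint $x\cdot y=t$ in additive characters, peel off the zero frequency as the main term, and gain the factor $\sqrt{q}$ over the trivial bound via the evaluation $\sum_{w\neq 0}\chi(cw)=-1$ for $c\neq 0$, combined with the nonnegativity of $f$ to discard the off-diagonal contribution. Your identities check out (the substitutions $y=-sx$, $u=-1/s$, $z=uy$ are all legitimate), and the key inequality $\sum_{y\neq 0}|G_y(t)|^2\leq (q-1)\|f\|_2^2$ is proved correctly. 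One small point of record: $t\neq 0$ is used twice, not once -- also in evaluating the $y=0$ boundary term as $-f(0)\|g\|_1/q$ (the same character-sum fact).

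The one place the write-up falls short of the stated inequality is the final absorption of that boundary term. As written you obtain
\[
|R(t)|\ \leq\ q^{\frac{d-2}{2}}\|f\|_2\|g\|_2+\sqrt{1-\tfrac1q}\;q^{\frac{d-1}{2}}\|f\|_2\|g\|_2,
\]
and since $1-\sqrt{1-1/q}\approx \tfrac{1}{2q}$ is much smaller than $q^{-1/2}$, the slack in the main term cannot absorb the correction: your constant is $1+O(q^{-1/2})$, strictly larger than the claimed $1$ for every $q$. This is harmless for the exponent, but the paper uses the constant-$1$ form in its explicit constants, so it is worth repairing. The fix uses bookkeeping you already have: since $\widehat g(0)=\|g\|_1$, Plancherel over $y\neq 0$ gives $\sum_{y\neq 0}|\widehat g(y)|^2=q^d\|g\|_2^2-\|g\|_1^2$, and your diagonal term is $(q-1)(\|f\|_2^2-f(0)^2)$; then a two-term Cauchy--Schwarz yields
\[
\frac1q\Bigl(f(0)\|g\|_1+\sqrt{q-1}\,\sqrt{\|f\|_2^2-f(0)^2}\,\sqrt{q^d\|g\|_2^2-\|g\|_1^2}\Bigr)
\ \leq\ \frac{\sqrt{q-1}}{q}\,\|f\|_2\,q^{\frac d2}\|g\|_2\ \leq\ q^{\frac{d-1}{2}}\|f\|_2\|g\|_2 .
\]
Alternatively, apply Cauchy--Schwarz in $x$ before Fourier-inverting in $y$, as in \cite{CHIKR}; then no $y=0$ term appears and the constant $1$ comes out directly.
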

Note that when $f,g$ are both the indicator function of $E$, this reduces to the previously mentioned result from \cite{HI08}.  Now we will obtain recursive estimates for $\mathcal{P}_k$, which are slightly different in the even and odd cases.  The induction works by decomposing a path into two paths of about half the length, joined together by an additional edge. 
\begin{lemma}
\begin{align*}
\mathcal{P}_{2k+1}&=q^{-1}\mathcal{P}_k^2+R_{2k+1}, \nonumber \\
\mathcal{P}_{2k}&=q^{-1}\mathcal{P}_k\mathcal{P}_{k-1}+R_{2k}
\end{align*}
where
\begin{align*}
|R_{2k+1}|&\leq q^{\frac{d-1}{2}}\mathcal{P}_{2k}, \nonumber \\
|R_{2k}|&\leq q^{\frac{d-1}{2}}\sqrt{\mathcal{P}_{2k}\mathcal{P}_{2k-2}}
\end{align*}
\end{lemma}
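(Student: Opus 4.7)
The plan is to introduce, for each vertex $x$, the auxiliary function $\nu_k(x)$ counting paths of length $k$ in $\mathcal{G}_t^{prod}(E)$ whose terminal vertex is $x$, extended by zero outside $E$. By reversing paths, $\nu_k(x)$ equally counts paths of length $k$ starting at $x$. Two basic combinatorial identities drive the argument. First, $\|\nu_k\|_1 = \mathcal{P}_k$ by definition. Second, $\|\nu_k\|_2^2 = \mathcal{P}_{2k}$: a pair of length-$k$ paths sharing the terminal vertex $x$ concatenates, after reversing the second, to a unique path of length $2k$ passing through $x$, and every path of length $2k$ arises this way for a unique choice of midpoint.

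With these in place, each recursion comes from cutting a path at an interior edge. For odd length $2k+1$, cut the path $(x_0,\ldots,x_{2k+1})$ at the central edge between $x_k$ and $x_{k+1}$. The two halves are independent paths of length $k$ and the only remaining condition is $x_k \cdot x_{k+1} = t$, so
$$\mathcal{P}_{2k+1} = \sum_{x \cdot y = t} \nu_k(x)\nu_k(y).$$
Applying Theorem \ref{CHIKR functional} with $f=g=\nu_k$ yields the main term $q^{-1}\|\nu_k\|_1^2 = q^{-1}\mathcal{P}_k^2$ and the error estimate $|R_{2k+1}| \leq q^{(d-1)/2}\|\nu_k\|_2^2 = q^{(d-1)/2}\mathcal{P}_{2k}$.

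For even length $2k$ there is no symmetric middle edge, so instead cut between $x_{k-1}$ and $x_k$, obtaining a path of length $k-1$ terminating at some $x$ and a path of length $k$ starting at some $y$, linked by $x\cdot y = t$. Thus
$$\mathcal{P}_{2k} = \sum_{x \cdot y = t} \nu_{k-1}(x)\nu_k(y),$$
and Theorem \ref{CHIKR functional} with $f=\nu_{k-1}$, $g=\nu_k$ produces the main term $q^{-1}\mathcal{P}_{k-1}\mathcal{P}_k$ together with the remainder bound
$$|R_{2k}| \leq q^{(d-1)/2}\|\nu_{k-1}\|_2\|\nu_k\|_2 = q^{(d-1)/2}\sqrt{\mathcal{P}_{2k-2}\mathcal{P}_{2k}}.$$
There is no real obstacle beyond correctly setting up the two identities $\|\nu_k\|_1=\mathcal{P}_k$ and $\|\nu_k\|_2^2=\mathcal{P}_{2k}$; once those are in hand, each part of the lemma is an immediate single application of Theorem \ref{CHIKR functional}. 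The one point requiring a moment of care is the interpretation of $\mathcal{P}_k$ as counting walks (so vertex repetitions are allowed), which is what makes both the concatenation identity for $\|\nu_k\|_2^2$ and the path-cutting decomposition exact rather than approximate.
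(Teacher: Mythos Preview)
Your proof is correct and follows essentially the same approach as the paper: your function $\nu_k$ is exactly the paper's $f_k$ (paths of length $k$ with a fixed endpoint), the identities $\|\nu_k\|_1=\mathcal{P}_k$ and $\|\nu_k\|_2^2=\mathcal{P}_{2k}$ are the same ones the paper uses, and both proofs reduce each recursion to a single application of Theorem~\ref{CHIKR functional}. Your added remark that $\mathcal{P}_k$ must be read as counting walks (allowing repeated vertices) is a worthwhile clarification that the paper leaves implicit.
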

\begin{proof}
Let 
\begin{align*}
f_1(x)&=\sum_y{E(x)E(y)D_t(x,y)}, \nonumber \\
f_{k+1}(x)&=\sum_y{E(x)f_k(y)D_t(x,y)},
\end{align*}
where $D_t(x,y)=1$ when $x\cdot y=t$, and is 0 otherwise.  Note that $f_k(x)$ is the number of paths of length $k$ in $\mathcal{G}_t^{prod}$ starting at $x$.  Then by Theorem \ref{CHIKR functional},
\begin{align*}
\mathcal{P}_{2k+1}&=\sum_{x,y}{f_k(x)f_k(y)D_t(x,y)}
=q^{-1}\left(\sum_{x}{f_k(x)}\right)^2+R_{2k+1} \nonumber \\
&=q^{-1}\mathcal{P}_k^2+R_{2k+1}
\end{align*}
with
\begin{align*}
|R_{2k+1}|\leq q^{\frac{d-1}{2}}\left(\sum_x{f_k(x)^2}\right)=q^{\frac{d-1}{2}}\mathcal{P}_{2k}
\end{align*}
Similarly, we have
\begin{align*}
\mathcal{P}_{2k}&=\sum_{x,y}{f_k(x)f_{k-1}(y)D_t(x,y)}
=q^{-1}\left(\sum_{x}{f_k(x)}\right)\left(\sum_y{f_{k-1}(y)}\right)+R_{2k} \nonumber \\
&=q^{-1}\mathcal{P}_k\mathcal{P}_{k-1}+R_{2k}
\end{align*}
with
\begin{align*}
|R_{2k}|\leq q^{\frac{d-1}{2}}||f_k||_{2}||f_{k-1}||_{2}
=q^{\frac{d-1}{2}}\sqrt{\mathcal{P}_{2k}\mathcal{P}_{2k-2}}
\end{align*}
\end{proof}
Since the bound for the error term $R_k$ still may depend on $\mathcal{P}_k,\mathcal{P}_{k-1},\mathcal{P}_{k-2}$, we will need an upper bound on $\mathcal{P}_k$ for this to be useful.
\begin{lemma}
Let
\begin{align*}
X=\frac{|E|+q^{\frac{d+1}{2}}}{q}
\end{align*}
Then 
\begin{align*}
\mathcal{P}_k\leq |E|X^k
\end{align*}
\end{lemma}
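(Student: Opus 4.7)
The plan is to prove by induction on $k$ the slightly stronger bound $||f_k||_2 \leq X^k\sqrt{|E|}$, from which
\begin{align*}
\mathcal{P}_k = ||f_k||_1 \leq \sqrt{|E|}\,||f_k||_2 \leq |E|\,X^k
\end{align*}
follows by a single application of Cauchy--Schwarz, since $f_k$ is supported on $E$. The base case is immediate: setting $f_0 := E$ (the indicator), one has $||f_0||_2 = \sqrt{|E|}$.

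For the inductive step, unfolding the definition $f_{k+1}(x) = E(x)\sum_{y:\,x\cdot y = t} f_k(y)$ and using $E(x) f_{k+1}(x) = f_{k+1}(x)$ gives the identity
\begin{align*}
||f_{k+1}||_2^2 = \sum_{x\cdot y = t} f_{k+1}(x)\, f_k(y).
\end{align*}
Both $f_{k+1}$ and $f_k$ are non-negative, so Theorem \ref{CHIKR functional} applies and bounds the right-hand side by $q^{-1}||f_{k+1}||_1\,||f_k||_1 + q^{\frac{d-1}{2}}||f_{k+1}||_2\,||f_k||_2$. Two applications of Cauchy--Schwarz ($||f_j||_1 \leq \sqrt{|E|}\,||f_j||_2$ for $j = k, k+1$, since both are supported on $E$), followed by dividing through by $||f_{k+1}||_2$ (which we may assume positive, else the claim is trivial), produce
\begin{align*}
||f_{k+1}||_2 \leq \left( q^{-1}|E| + q^{\frac{d-1}{2}} \right) ||f_k||_2 = X\,||f_k||_2,
\end{align*}
which closes the induction.

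The main obstacle -- and the key conceptual move -- is to iterate at the level of the $L^2$ norm rather than directly on $\mathcal{P}_k = ||f_k||_1$. The naive recursion obtained by applying Theorem \ref{CHIKR functional} to the bilinear form $\sum_{x\cdot y = t} E(x)\, f_k(y)$ produces $\mathcal{P}_{k+1} \leq q^{-1}|E|\,\mathcal{P}_k + q^{\frac{d-1}{2}}\sqrt{|E|\,\mathcal{P}_{2k}}$, whose error term involves $\mathcal{P}_{2k}$; since $2k > k+1$ for $k \geq 2$, no induction on $\mathcal{P}_\bullet$ alone can close. Iterating $||f_k||_2$ instead puts the two factors in Theorem \ref{CHIKR functional} into symmetric roles, so after the Cauchy--Schwarz conversions the main and error terms combine into the single factor $X$ demanded by the lemma.
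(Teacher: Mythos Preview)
Your proof is correct and genuinely different from the paper's. The paper inducts directly on $\mathcal{P}_k$ using the half-splitting recursions $\mathcal{P}_{2k+1}=q^{-1}\mathcal{P}_k^2+R_{2k+1}$ and $\mathcal{P}_{2k}=q^{-1}\mathcal{P}_k\mathcal{P}_{k-1}+R_{2k}$ established in the preceding lemma; the odd case is then a short substitution, but the even case requires squaring the implicit inequality, completing the square to isolate $\mathcal{P}_{2k}$, and a nontrivial algebraic simplification to recover the factor $X^{2k}$. Your approach sidesteps all of this by tracking $\|f_k\|_2$ instead: the identity $\|f_{k+1}\|_2^2=\sum_{x\cdot y=t}f_{k+1}(x)f_k(y)$ together with Theorem~\ref{CHIKR functional} and Cauchy--Schwarz on the support yields the clean one-step recursion $\|f_{k+1}\|_2\le X\|f_k\|_2$, with no parity split and no quadratic solving. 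The paper's route has the minor advantage of reusing the recursion lemma already in place, but your argument is shorter, more transparent about why the constant $X$ arises, and would adapt more readily to other edge operators satisfying the same $L^2$ bound.
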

\begin{proof}
We proceed by induction, noting that the case $k=1$ follows from Theorem \ref{CHIKR functional}.  For the odd case, we get
\begin{align*}
\mathcal{P}_{2k+1}&\leq q^{-1}\mathcal{P}_k^2+q^{\frac{d-1}{2}}\mathcal{P}_{2k}
\leq q^{-1}|E|^2X^{2k}+q^{\frac{d-1}{2}}|E|X^{2k} \nonumber \\
&=|E|X^{2k+1}
\end{align*}
For the even case,
\begin{align*}
|\mathcal{P}_{2k}-q^{-1}\mathcal{P}_k\mathcal{P}_{k-1}|\leq q^{\frac{d-1}{2}}\sqrt{\mathcal{P}_{2k}\mathcal{P}_{2k-2}}
\end{align*}
Squaring and then completing the square to solve for $\mathcal{P}_{2k}$, we find that
\begin{align*}
\left(\mathcal{P}_{2k}-\frac{1}{2}q^{d-1}\mathcal{P}_{2k-2}-q^{-1}\mathcal{P}_k\mathcal{P}_{k-1}\right)^2
\leq \frac{1}{4}q^{2d-2}\mathcal{P}_{2k-2}^2+q^{d-2}\mathcal{P}_k\mathcal{P}_{k-1}\mathcal{P}_{2k-2}
\end{align*}
And thus
\begin{align*}
\mathcal{P}_{2k}&\leq q^{-1}\mathcal{P}_k\mathcal{P}_{k-1}+\frac{1}{2}q^{d-1}\mathcal{P}_{2k-2}+\sqrt{\frac{1}{4}q^{2d-2}\mathcal{P}_{2k-2}^2+q^{d-2}\mathcal{P}_k\mathcal{P}_{k-1}\mathcal{P}_{2k-2}} \nonumber \\
&\leq \frac{|E|^2}{q}X^{2k-1}+\frac{1}{2}q^{d-1}|E|X^{2k-2}+\sqrt{\frac{1}{4}q^{2d-2}|E|^2X^{4k-4}+q^{d-2}|E|^3X^{4k-3}} \nonumber \\
&=\frac{|E|^2}{q}X^{2k-1}+\frac{1}{2}q^{d-1}|E|X^{2k-2}\left(1+\sqrt{1+4q^{-d}|E|X}\right)
\end{align*}
Now,
\begin{align*}
1+4q^{-d}|E|X=\frac{q^{d+1}+4|E|^2+4q^{\frac{d+1}{2}}|E|}{q^{d+1}}
=\frac{\left(q^{\frac{d+1}{2}}+2|E|\right)^2}{q^{d+1}}
\end{align*}
so
\begin{align*}
\mathcal{P}_{2k}&\leq \frac{|E|^2}{q}X^{2k-1}+q^{d-1}|E|X^{2k-2}\left(1+\frac{|E|}{q^{\frac{d+1}{2}}}\right) \nonumber \\
&=|E|X^{2k-2}\left(\frac{|E|X}{q}+q^{d-1}+q^{\frac{d-3}{2}}|E|\right) \nonumber \\
&=|E|X^{2k-2}\left(\frac{|E|^2+q^{\frac{d+1}{2}}|E|}{q^2}+q^{d-1}+q^{\frac{d-3}{2}}|E|\right) \nonumber \\
&=|E|X^{2k-2}\left(\frac{|E|^2+2q^{\frac{d+1}{2}}|E|+q^{d+1}}{q^2}\right) \nonumber \\
&=|E|X^{2k}
\end{align*}
\end{proof}
We now have enough control over the error term $R_k$ to prove the Theorem.
\begin{proof}[Proof of Theorem \ref{Chains}]
The binomial expansion for $X^k$ yields
\begin{align*}
\mathcal{P}_k&\leq |E|X^k
=\frac{E}{q^k}\sum_{i=0}^k{\binom{k}{i}|E|^{k-i}q^{i\frac{d+1}{2}}} \nonumber \\
&=\frac{|E|^{k+1}}{q^k}+\frac{q^{\frac{d+1}{2}}|E|}{q^k}\sum_{i=1}^k{\binom{k}{i}|E|^{k-i}q^{(i-1)\frac{d+1}{2}}} \nonumber \\
&\leq \frac{|E|^{k+1}}{q^k}+\frac{q^{\frac{d+1}{2}}|E|^k}{q^k}\sum_{i=1}^k{\binom{k}{i}\left(\frac{\log{2}}{k}\right)^{i-1}} \nonumber \\
&=\frac{|E|^{k+1}}{q^k}+\frac{k}{\log{2}}q^{\frac{d+1}{2}}\frac{|E|^k}{q^k}\left(\left(1+\frac{\log{2}}{k}\right)^k-1\right) \nonumber \\
&\leq \frac{|E|^{k+1}}{q^k}+\frac{k}{\log{2}}q^{\frac{d+1}{2}}\frac{|E|^k}{q^k}
\end{align*}
We will show that $\mathcal{P}_k\geq \frac{|E|^{k+1}}{q^k}-\frac{k}{\log{2}}q^{\frac{d-1}{2}}|E|$ by induction, noting that the case $k=1$ follows from Theorem \ref{CHIKR functional}.  Suppose that the result holds for indices less than $2k+1$.  Then
\begin{align*}
\mathcal{P}_{2k+1}&\geq \frac{\mathcal{P}_k^2}{q}-q^{\frac{d-1}{2}}\mathcal{P}_{2k} \nonumber \\
&\geq q^{-1}\left(\frac{|E|^{k+1}}{q^k}-\frac{k}{\log{2}}q^{\frac{d+1}{2}}\frac{|E|^k}{q^k}\right)^2-q^{\frac{d-1}{2}}\left(\frac{|E|^{2k+1}}{q^{2k}}+\frac{k}{\log{2}}q^{\frac{d+1}{2}}\frac{|E|^{2k}}{q^{2k}}\right) \nonumber \\
&=\frac{|E|^{2k+2}}{q^{2k+1}}-q^{\frac{d+1}{2}}\frac{|E|^{2k+1}}{q^{2k+1}}\left(\frac{2k}{\log{2}}-q^{\frac{d+1}{2}}\frac{k^2}{|E|(\log{2})^2}+1+\frac{k}{|E|\log{2}}q^{\frac{d+1}{2}}\right) \nonumber \\
&\geq \frac{|E|^{2k+2}}{q^{2k+1}}-q^{\frac{d+1}{2}}\frac{|E|^{2k+1}}{q^{2k+1}}\left(\frac{2k}{\log{2}}+1\right) \nonumber \\
&\geq \frac{|E|^{2k+2}}{q^{2k+1}}-\frac{2k+1}{\log{2}}q^{\frac{d+1}{2}}\frac{|E|^{2k+1}}{q^{2k+1}}
\end{align*}
For the even case suppose that the result holds for indices less than $2k$.  Note that we have already established the upper bound for all $k$, and in fact for sufficiently large $q$, we can use the weaker upper bound $\mathcal{P}_{k}\leq \frac{1}{\log{2}}\frac{|E|^{k+1}}{q^k}$ to obtain
\begin{align*}
\mathcal{P}_{2k}&\geq \frac{\mathcal{P}_k\mathcal{P}_{k-1}}{q}-q^{\frac{d-1}{2}}\sqrt{\mathcal{P}_{2k}\mathcal{P}_{2k-2}} \nonumber \\
&\geq q^{-1}\left(\frac{|E|^{k+1}}{q^k}-\frac{k}{\log{2}}q^{\frac{d+1}{2}}\frac{|E|^k}{q^k}\right)\left(\frac{|E|^k}{q^{k-1}}-\frac{k-1}{\log{2}}q^{\frac{d+1}{2}}\frac{|E|^{k-1}}{q^{k-1}}\right)-\frac{1}{\log{2}}q^{\frac{d-1}{2}}\frac{|E|^{2k}}{q^{2k-1}} \nonumber \\
&=\frac{|E|^{2k+1}}{q^{2k}}-q^{\frac{d+1}{2}}\frac{|E|^{2k}}{q^{2k}}\left(\frac{2k-1}{\log{2}}-q^{\frac{d+1}{2}}\frac{k(k-1)}{|E|(\log{2})^2}+\frac{1}{\log{2}}\right) \nonumber \\
&\geq\frac{|E|^{2k+1}}{q^{2k}}-\frac{2k}{\log{2}}q^{\frac{d+1}{2}}\frac{|E|^{2k}}{q^{2k}}
\end{align*}
\end{proof}

\section{Proof of the Main Theorem}

\vskip.125in
We first present the following functional theorems for the distance and dot-product case respectively, which will allow us to prove the main theorem inductively.

\begin{theorem}\label{distance cycles theorem}
For nonnegative functions $f,g:\mathbb{F}_q^d\times\mathbb{F}_q^d\to \mathbb{R}$, let $F(x)=\sum_y{f(x,y)}$, $G(z)=\sum_w{g(z,w)}$, $F'(y)=\sum_x{f(x,y)}$, $G'(w)=\sum_z{g(z,w)}$.  Then for nonzero $t\in \mathbb{F}_q$,
\begin{align*}
&\left|\sum_{||x-z||=||w-y||=t}{f(x,y)g(z,w)}-q^{-2}||f||_{1}||g||_{1}\right| \\
& \ \ \ \ \ \ \leq 3q^{-\frac{d+2}{2}}||f||_1||g||_1 + 4q^{d-1}||f||_{2}||g||_{2}+
4q^{\frac{d-3}{2}}\left(||F||_{2}||G||_{2}+||F'||_{2}||G'||_{2}\right)
\end{align*}
In the case $d=2$, one has
\begin{align*}
&\left|\sum_{||x-z||=||w-y||=t}{f(x,y)g(z,w)}-q^{-2}||f||_{1}||g||_{1}\right| \\
& \ \ \ \ \ \ \leq 3q^{-3}||f||_1||g||_1 + 4q||f||_{2}||g||_{2}+
4q^{-\frac{1}{2}}\left(||F||_{2}||G||_{2}+||F'||_{2}||G'||_{2}\right)
\end{align*}
\end{theorem}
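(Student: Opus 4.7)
The natural approach is Fourier analysis on ${\Bbb F}_q^d$. Let $\chi$ be a fixed nontrivial additive character of ${\Bbb F}_q$, let $S_t(v) = 1$ if $||v||=t$ and $0$ otherwise, and recall two standard facts: $|S_t| = q^{d-1} + O(q^{d/2})$, with the much sharper estimate $|S_t| = q + O(1)$ in dimension two, and $|\widehat{S_t}(\eta)| \leq 2q^{(d-1)/2}$ for $\eta \neq 0$ whenever $t \neq 0$. Applying Fourier inversion
\[
S_t(v) = q^{-d} \sum_{\eta \in {\Bbb F}_q^d} \widehat{S_t}(\eta) \chi(-v \cdot \eta)
\]
to both $S_t(x-z)$ and $S_t(w-y)$, and then factoring the resulting sum over $(x,y,z,w)$, produces
\[
T := \sum_{||x-z||=||w-y||=t} f(x,y) g(z,w) = q^{-2d} \sum_{\eta,\zeta} \widehat{S_t}(\eta) \widehat{S_t}(\zeta)\, \widehat{f}(-\eta,\zeta)\, \widehat{g}(\eta,-\zeta),
\]
where $\widehat{f}(\xi_1,\xi_2) = \sum_{x,y} f(x,y) \chi(x\cdot \xi_1 + y\cdot \xi_2)$, and analogously for $\widehat{g}$.

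The key step is to split this double sum into four regimes according to whether each of $\eta,\zeta$ is zero. The diagonal regime $\eta = \zeta = 0$ contributes $q^{-2d}|S_t|^2 \,||f||_1 \,||g||_1$, so in absolute value the discrepancy from the target $q^{-2}||f||_1 ||g||_1$ equals $q^{-2d}(|S_t|-q^{d-1})(|S_t|+q^{d-1})||f||_1||g||_1$; inserting the sphere-size bounds controls this by $3q^{-(d+2)/2}||f||_1||g||_1$ in general and by $3q^{-3}||f||_1||g||_1$ when $d=2$. For the regime $\eta \neq 0$, $\zeta = 0$, the identities $\widehat{f}(-\eta,0) = \widehat{F}(-\eta)$ and $\widehat{g}(\eta,0) = \widehat{G}(\eta)$ reduce the sum to a pairing in the Fourier transforms of $F, G$; combining $|\widehat{S_t}(\eta)| \leq 2q^{(d-1)/2}$, the crude bound $|S_t| \leq 2q^{d-1}$, Cauchy-Schwarz in $\eta$, and Parseval $\sum_\eta |\widehat{F}(\eta)|^2 = q^d ||F||_2^2$ yields a contribution of at most $4q^{(d-3)/2}||F||_2||G||_2$, and the symmetric regime $\eta = 0$, $\zeta \neq 0$ produces $4q^{(d-3)/2}||F'||_2||G'||_2$. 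Finally, in the regime $\eta,\zeta \neq 0$ both sphere Fourier factors are bounded by $2q^{(d-1)/2}$, so Cauchy-Schwarz in $(\eta,\zeta)$ together with Parseval on ${\Bbb F}_q^{2d}$ yields $4q^{d-1}||f||_2||g||_2$.

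Summing the main term with these three off-diagonal estimates yields the theorem in both the general and $d=2$ cases. The principal obstacle is constant bookkeeping: matching the factor $3$ in the main-term discrepancy, and verifying that the sharper estimate $|S_t| = q+O(1)$ in dimension two is what produces the improved $3q^{-3}$ coefficient. The three off-diagonal bounds are essentially forced once the Fourier bound on $\widehat{S_t}$ and Plancherel are in hand.
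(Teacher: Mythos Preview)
Your proposal is correct and follows essentially the same route as the paper: Fourier-invert the two sphere indicators, split the resulting double sum over $(\eta,\zeta)$ into the four regimes according to which of $\eta,\zeta$ vanishes, and bound each regime using $|\widehat{S_t}(\eta)|\leq 2q^{(d-1)/2}$ for $\eta\neq 0$, the crude bound $|S_t|\leq 2q^{d-1}$, Cauchy--Schwarz, and Plancherel. The only cosmetic differences are your choice of the unnormalized Fourier transform and your handling of the main-term discrepancy via the factorization $(|S_t|-q^{d-1})(|S_t|+q^{d-1})$ rather than the paper's explicit $\mathcal{E}'=2q^{d-1}\mathcal{E}+\mathcal{E}^2$; both yield the same constants.
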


\vskip.125in

\begin{theorem}\label{functional}
With the same notation as in Theorem \ref{distance cycles theorem},
\begin{align*}
\left|\sum_{x\cdot z=y\cdot w=t}{f(x,y)g(z,w)}-q^{-2}||f||_{1}||g||_{1}\right|
\leq 2q^{d-1}||f||_{2}||g||_{2}+q^{\frac{d-3}{2}}\left(||F||_{2}||G||_{2}+||F'||_{2}||G'||_{2}\right)
\end{align*}
\end{theorem}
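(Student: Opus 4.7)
The plan is to carry out a character expansion in the same spirit as the proof of Theorem~\ref{CHIKR functional}. Writing $\mathbf 1_{a\cdot b=t}=q^{-1}\sum_{s\in\mathbb F_q}\chi(s(a\cdot b-t))$ for each of the two dot-product constraints transforms the left-hand side into
\begin{align*}
S = q^{-2}\sum_{s,s'\in\mathbb F_q}\chi\!\bigl(-(s+s')t\bigr)\Phi(s,s'),\qquad \Phi(s,s') := \sum_{x,y,z,w} f(x,y)g(z,w)\chi(sx\cdot z + s'y\cdot w).
\end{align*}
The $(s,s')=(0,0)$ term contributes exactly $q^{-2}\|f\|_1\|g\|_1$, so it remains to bound three other ranges: the two ``boundary'' ranges $\{s=0,s'\neq 0\}$ and $\{s\neq 0,s'=0\}$, and the ``generic'' range $\{s,s'\neq 0\}$.

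In each boundary range the pair $(x,z)$ or $(y,w)$ on which no character has been imposed can be summed out, collapsing $f$ and $g$ to their marginals $F',G'$ (resp.\ $F,G$). Re-assembling the surviving one-variable character sum, each boundary contribution reduces algebraically to the error term produced by a single application of Theorem~\ref{CHIKR functional} to that pair of marginals, and is therefore bounded by $q^{(d-3)/2}\|F'\|_2\|G'\|_2$ or $q^{(d-3)/2}\|F\|_2\|G\|_2$ respectively, which accounts for the second summand in the claim.

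The substance of the proof lies in the generic range $T_0 := q^{-2}\sum_{s,s'\neq 0}\chi(-(s+s')t)\Phi(s,s')$, since a triangle inequality across its $(q-1)^2$ summands would overshoot the target $2q^{d-1}\|f\|_2\|g\|_2$ by a full factor of $q$. I would apply Cauchy--Schwarz in $(x,y)$ against $f$, reducing the task to proving
\begin{align*}
\sum_{x,y}|H(x,y)|^2 \leq 4\,q^{2d+2}\|g\|_2^2,\qquad H(x,y) := \sum_{s,s'\neq 0}\chi(-(s+s')t)\,\hat g(sx,s'y),
\end{align*}
where $\hat g(\xi,\eta) := \sum_{z,w}g(z,w)\chi(\xi\cdot z+\eta\cdot w)$. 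Expanding $|H|^2$ as a quadruple sum over $(s,s',r,r')$ and invoking orthogonality of characters in $(x,y)$ forces the coupling $sz=rz'$, $s'w=r'w'$; after the substitution $(a,b)=(s/r,s'/r')$, the inner sum collapses to $\langle g, g_{a,b}\rangle$ with $g_{a,b}(z,w):=g(az,bw)$ (bounded by $\|g\|_2^2$ via Cauchy--Schwarz since $a,b\neq 0$), while the residual one-variable sums in $(r,r')$ produce Gauss-type coefficients of modulus at most $q-1$, summing in absolute value to $(2q-3)^2\leq 4q^2$. Combining these inputs yields the needed bound on $\sum|H|^2$ and hence $|T_0|\leq 2q^{d-1}\|f\|_2\|g\|_2$. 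The only substantive obstacle is precisely this need to couple Cauchy--Schwarz with character orthogonality to recover the extra factor of $q$; the remaining combinatorial bookkeeping is straightforward.
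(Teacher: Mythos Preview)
Your proposal is correct and follows essentially the same route as the paper: the same character expansion, the same reduction of the boundary ranges to the error term of Theorem~\ref{CHIKR functional} applied to the marginals, and the same Cauchy--Schwarz-then-orthogonality argument for the generic range, with the same substitution $a=s/r$, $b=s'/r'$. The only cosmetic difference is that the paper splits the resulting sum into four subcases according to whether $a=1$ and/or $b=1$ and bounds each by $q^2\|g\|_2^2$, whereas you keep them together via the coefficient bound $\sum_{a\neq 0}|c(a)|=2q-3$ (note these coefficients are ordinary character sums over $\mathbb F_q^*$, not Gauss sums, but your numerics are correct).
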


\begin{definition}
For $t\in \mathbb{F}_q$, let $S_t=\{x\in \mathbb{F}_q^d: ||x||=t\}$.  We will also identify a set with its indicator function, so that $S_t(x)=1$ when $x\in S_t$.  
\end{definition}

We will use a few facts about the discrete sphere $S_t$.  See for example the appendix of \cite{BHIPR17} for a treatment of a theorem proved by Minkowski at the age of 17 (see \cite{Min11}), of which the following lemma is a special case. 

\begin{lemma}\label{sphere bounds nonzero}
When $t\neq 0$,

\begin{align*}
|S_t| &= q^{d-1}+\mathcal{E}, \ \text{with} \\
|\mathcal{E}|&\leq q^{\frac{d}{2}}
\end{align*}
Moreover, in the case $d=2$,
\begin{align*}
|S_t|&=q\pm 1
\end{align*}
\end{lemma}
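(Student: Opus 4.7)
The plan is to compute $|S_t|$ by additive-character orthogonality, factor the resulting exponential sum across coordinates, and recognize each factor as a classical quadratic Gauss sum. Fix a nontrivial additive character $\chi$ of $\mathbb{F}_q$ and write
$$
|S_t| \;=\; \frac{1}{q}\sum_{x \in \mathbb{F}_q^d}\sum_{s \in \mathbb{F}_q} \chi\bigl(s(||x||-t)\bigr).
$$
The $s = 0$ term contributes the main term $q^{d-1}$, and the remainder takes the factored form
$$
\mathcal{E} \;=\; \frac{1}{q}\sum_{s \neq 0} \chi(-st)\prod_{i=1}^d \sum_{x_i \in \mathbb{F}_q} \chi(s x_i^2).
$$

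Next I would invoke the standard identity $\sum_{x}\chi(s x^2) = \eta(s)\, g$ for $s \neq 0$, where $\eta$ is the quadratic character and $g := \sum_{x}\chi(x^2)$ is a Gauss sum of modulus $\sqrt{q}$ (a classical fact, recalled in the appendix of \cite{BHIPR17}). This collapses the error to
$$
\mathcal{E} \;=\; \frac{g^d}{q}\sum_{s \neq 0} \eta(s)^d\, \chi(-st),
$$
and I would then split by the parity of $d$. When $d$ is even, $\eta^d \equiv 1$ and $\sum_{s \neq 0}\chi(-st) = -1$, giving $|\mathcal{E}| = q^{d/2 - 1}$. When $d$ is odd, the rescaling $s \mapsto s/t$ rewrites the sum as another Gauss sum of modulus $\sqrt{q}$, giving $|\mathcal{E}| = q^{(d-1)/2}$. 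Both are comfortably below the claimed $q^{d/2}$.

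For the sharper $d = 2$ statement, I would use the classical evaluation $g^2 = \eta(-1)\, q$ (not just $|g|^2 = q$). Combined with the character sum value $-1$ from the even case above, this yields $\mathcal{E} = -\eta(-1) \in \{\pm 1\}$, and hence $|S_t| = q \pm 1$. The only substantive input is the Gauss-sum magnitude $|g| = \sqrt{q}$ together with the exact value of $g^2$ in dimension two; everything else is mechanical character orthogonality, so I do not anticipate a real obstacle beyond citing the appropriate classical reference.
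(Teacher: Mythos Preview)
Your argument is correct and is the standard Gauss-sum computation for $|S_t|$. Note, however, that the paper does not actually supply its own proof of this lemma: it is stated as a known fact and attributed to the appendix of \cite{BHIPR17} (ultimately going back to Minkowski \cite{Min11}). So there is no in-paper proof to compare against; what you have written is essentially the proof one would find upon following that reference.

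One small remark: your evaluation $\sum_{x}\chi(sx^2)=\eta(s)g$ and the identity $g^2=\eta(-1)q$ presuppose that $q$ is odd. This is the standard implicit hypothesis in this setting (in characteristic $2$ the form $\|x\|=\sum x_i^2=(\sum x_i)^2$ degenerates and, e.g., $|S_t|=q$ rather than $q\pm 1$ when $d=2$), but it is worth flagging since you do not state it explicitly.
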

\begin{lemma}
 If $t\neq 0$ and $m\neq 0$, then
\begin{align*}
|\hat{S}_t(m)|\leq 2q^{-\frac{d+1}{2}}
\end{align*}
\end{lemma}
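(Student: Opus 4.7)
The plan is a standard Fourier/Gauss-sum computation. Writing
\begin{align*}
\hat{S}_t(m) = q^{-d}\sum_{x\in \mathbb{F}_q^d} S_t(x)\,\chi(-x\cdot m),
\end{align*}
for a nontrivial additive character $\chi$ of $\mathbb{F}_q$, I would detect the sphere condition by orthogonality, writing $S_t(x) = q^{-1}\sum_{s\in \mathbb{F}_q}\chi(s(||x||-t))$. The $s=0$ contribution is $q^{-d-1}\sum_{x}\chi(-x\cdot m)$, which vanishes because $m\neq 0$. This leaves
\begin{align*}
\hat{S}_t(m) = q^{-d-1}\sum_{s\neq 0}\chi(-st)\sum_{x\in \mathbb{F}_q^d}\chi(s||x||-x\cdot m).
\end{align*}

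Next, I would complete the square in each coordinate of $x$ (using that $p$ is odd, which is the regime of interest). For $s\neq 0$,
\begin{align*}
s||x|| - x\cdot m = s\sum_{i=1}^{d}\left(x_i - \frac{m_i}{2s}\right)^{\!2} - \frac{||m||}{4s},
\end{align*}
so the inner sum over $x$ factors as $\chi(-||m||/(4s))\,G(s)^d$, where $G(s) = \sum_{y\in \mathbb{F}_q}\chi(sy^2) = \eta(s)G(1)$, $\eta$ is the quadratic character of $\mathbb{F}_q^{\times}$, and $|G(1)|=\sqrt{q}$. This reduces the claim to bounding
\begin{align*}
\hat{S}_t(m) = q^{-d-1}\,G(1)^{d}\sum_{s\neq 0}\eta(s)^{d}\,\chi\!\left(-st - \frac{||m||}{4s}\right).
\end{align*}

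The last step is to bound the remaining exponential sum in $s$. If $d$ is even, $\eta(s)^d=1$, and this is a Kloosterman sum (or equal to $-1$ in the degenerate case $||m||=0$), whose absolute value is at most $2\sqrt{q}$ by Weil's estimate. If $d$ is odd, it is a Salie sum, which has an explicit closed-form evaluation as a Gauss sum times a character sum and also satisfies the $2\sqrt{q}$ bound. Either way, assembling the pieces gives $|\hat{S}_t(m)| \leq q^{-d-1}\cdot q^{d/2}\cdot 2\sqrt{q} = 2q^{-(d+1)/2}$, as required.

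The only nontrivial input is the classical $2\sqrt{q}$ bound on Kloosterman and Salie sums, which is not proved by elementary means; I would either cite it as a standard fact (as is done in the appendix of \cite{BHIPR17} and many references in this literature) or supply a short direct argument in the Salie case using its explicit evaluation. The rest is routine completing-the-square and Gauss-sum bookkeeping.
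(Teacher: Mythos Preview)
Your proof is correct and is the standard Gauss--Kloosterman computation; the paper does not supply its own proof of this lemma but treats it as a known fact (implicitly referring the reader to \cite{IR07} and the appendix of \cite{BHIPR17}), and the argument found in those references is exactly the one you give. One small remark: you might also note explicitly the degenerate subcase $||m||=0$ with $d$ odd, where the $s$-sum collapses to a single Gauss sum of modulus $\sqrt{q}$, but this is of course covered by the $2\sqrt{q}$ bound you state.
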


\begin{corollary}
\begin{align*}
|S_t|\leq 2q^{d-1}, \ \text{and}
\end{align*}
\begin{align*}
|S_t|^2&=q^{2d-2}+\mathcal{E}', \text{with} \\
|\mathcal{E}'|& = |2q^{d-1}\mathcal{E}+\mathcal{E}^2| \leq 3q^{\frac{3d-2}{2}}
\end{align*}
In the case $d=2$, 
\begin{align*}
|\mathcal{E}'|\leq 3q
\end{align*}
\end{corollary}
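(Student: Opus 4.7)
The corollary is a direct algebraic consequence of the preceding Lemma (i.e.\ $|S_t|=q^{d-1}+\mathcal{E}$ with $|\mathcal{E}|\le q^{d/2}$, and $|S_t|=q\pm 1$ when $d=2$), so the plan is essentially bookkeeping: expand, apply the triangle inequality, and compare exponents.

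First, for the bound $|S_t|\le 2q^{d-1}$, I would simply use
\[
|S_t| \le q^{d-1}+|\mathcal{E}| \le q^{d-1}+q^{d/2},
\]
and note that $q^{d/2}\le q^{d-1}$ whenever $d\ge 2$, which is the standing assumption.

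Next, for the second statement, expand
\[
|S_t|^2 = (q^{d-1}+\mathcal{E})^2 = q^{2d-2}+2q^{d-1}\mathcal{E}+\mathcal{E}^2,
\]
so $\mathcal{E}' := 2q^{d-1}\mathcal{E}+\mathcal{E}^2$ is exactly what the statement names. Applying $|\mathcal{E}|\le q^{d/2}$ twice gives
\[
|\mathcal{E}'| \le 2q^{d-1}\cdot q^{d/2} + q^{d} = 2q^{(3d-2)/2}+q^{d},
\]
and since $d\le (3d-2)/2$ for $d\ge 2$, the second term is dominated by the first, yielding $|\mathcal{E}'|\le 3q^{(3d-2)/2}$.

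Finally, in the $d=2$ case, the lemma sharpens the error to $|\mathcal{E}|\le 1$, so
\[
|\mathcal{E}'| \le 2q\cdot 1 + 1 \le 3q,
\]
since $q\ge 1$. There is no real obstacle here; the only thing to watch is that the exponent comparison $d/2\le d-1$ (and $d\le (3d-2)/2$) uses $d\ge 2$, which is already built into the setup of the paper.
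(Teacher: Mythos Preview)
Your argument is correct and is exactly the straightforward bookkeeping the paper has in mind; the corollary is stated without proof precisely because it follows from the preceding Lemma by expanding the square and comparing exponents as you do.
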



\begin{proof}[Proof of Theorem \ref{distance cycles theorem}]

To estimate the relevant sum, by Fourier inversion on $S_t$ and then unpacking the definition of $\hat{f}$ and $\hat{g}$ we find that

\begin{align*}
\sum_{||x-z||=||w-y||=t}{f(x,y)g(z,w)}
&= \sum_{x,y,z,w}{f(x,y)g(z,w)S_t(x-z)S_t(y-w)} \\
&=\sum_{x,y,z,w}\sum_{m,\ell}{\hat{S}_t(m)\hat{S}_t(\ell)\chi((x-z)\cdot m)\chi((y-w)\cdot \ell)f(x,y)g(z,w)} \\
&=q^{4d}\sum_{m,\ell}{\hat{S}_t(m)\hat{S}_t(\ell)\overline{\hat{f}(m,\ell)}\hat{g}(m,\ell)} \\
&=\left(\frac{|S_t|}{q^d}\right)^2||f||_1||g||_1 + R_A+R_B+R_C \\
&=q^{-2}||f||_1||g||_1+q^{-2d}\mathcal{E}'||f||_1||g||_1+R_A+R_B+R_C
\end{align*}
Where $R_A$ corresponds to the terms where $m,\ell\neq 0$, $R_B$ corresponds to the terms where $m\neq 0$ and $\ell=0$, and $R_C$ corresponds to the terms where $m=0$ and $\ell\neq 0$.  Now Theorem \ref{distance cycles theorem} is an immediate corollary of the following lemma:

\end{proof}

\begin{lemma}\label{distance cycles functional lemma}

\begin{align*}
|R_A|&\leq
4q^{d-1}
\abss{f}_2
\abss{g}_2, \\
|R_B|&\leq
4q^{\frac{d-3}{2}}
\abss{F}_2
\abss{G}_2,\\
|R_C|&\leq
4q^{\frac{d-3}{2}}
\abss{F'}_2
\abss{G'}_2.
\end{align*}
\end{lemma}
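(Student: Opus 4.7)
The plan is to continue directly from the Fourier expansion isolated in the proof of Theorem \ref{distance cycles theorem}, namely
\[
R_A+R_B+R_C \;=\; q^{4d}\sum_{(m,\ell)\neq (0,0)} \hat S_t(m)\,\hat S_t(\ell)\,\overline{\hat f(m,\ell)}\,\hat g(m,\ell),
\]
and estimate each of the three pieces by combining the sphere Fourier bounds with Cauchy--Schwarz and Plancherel. Throughout I will use the standard normalization $\hat f(m,\ell)=q^{-2d}\sum_{x,y}f(x,y)\chi(-(x,y)\cdot(m,\ell))$, so that $\|\hat f\|_2^2=q^{-2d}\|f\|_2^2$, together with $|\hat S_t(m)|\le 2q^{-(d+1)/2}$ for $m\neq 0$ (the lemma stated above) and $|\hat S_t(0)|=q^{-d}|S_t|\le 2q^{-1}$ (from the corollary).

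For $R_A$, where both $m\neq 0$ and $\ell\neq 0$, I apply the non-trivial sphere bound twice to get $|\hat S_t(m)\hat S_t(\ell)|\le 4q^{-(d+1)}$, pull the factor out, and then estimate
\[
\sum_{m,\ell}|\hat f(m,\ell)||\hat g(m,\ell)|\le \|\hat f\|_2\|\hat g\|_2 = q^{-2d}\|f\|_2\|g\|_2
\]
by Cauchy--Schwarz and Plancherel. Multiplying by the prefactor $q^{4d}$ collapses to $4q^{d-1}\|f\|_2\|g\|_2$, which is the claimed bound.

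For $R_B$, where $\ell=0$ and $m\neq 0$, the key observation is that the partial Fourier transform factors through $F$ and $G$:
\[
\hat f(m,0)=q^{-d}\hat F(m),\qquad \hat g(m,0)=q^{-d}\hat G(m),
\]
where $\hat F,\hat G$ denote the $d$-dimensional Fourier transforms of $F$ and $G$. Using $|\hat S_t(0)|\le 2q^{-1}$ and $|\hat S_t(m)|\le 2q^{-(d+1)/2}$ for the non-zero frequencies, I can bound
\[
|R_B|\le q^{4d}\cdot 2q^{-1}\cdot 2q^{-(d+1)/2}\cdot q^{-2d}\sum_{m\neq 0}|\hat F(m)||\hat G(m)|,
\]
and then Cauchy--Schwarz plus Plancherel ($\|\hat F\|_2=q^{-d/2}\|F\|_2$) gives $\sum_m|\hat F(m)||\hat G(m)|\le q^{-d}\|F\|_2\|G\|_2$. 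Collecting powers of $q$ yields exactly $4q^{(d-3)/2}\|F\|_2\|G\|_2$. The bound on $R_C$ follows by the mirror-image argument in the $(z,w)$ variables, replacing $F,G$ with the marginals $F',G'$.

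The calculation is almost entirely bookkeeping; the only conceptual step is recognizing that integrating out the second variable in $\hat f(m,\ell)$ at $\ell=0$ reduces $f$ to its marginal $F$, which is what lets the tighter $\|F\|_2,\|G\|_2$ norms (rather than $\|f\|_2,\|g\|_2$) appear in $R_B$ and $R_C$. The main place to be careful is matching the Fourier normalization conventions so that every factor of $q$ lands in the right place, and keeping track of the constant $2$ from the sphere Fourier bound through each of the three terms.
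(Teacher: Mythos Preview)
Your proof is correct and follows essentially the same approach as the paper: both argue via the Fourier expansion isolated just before the lemma, bound $R_A$ using $|\hat S_t(m)\hat S_t(\ell)|\le 4q^{-(d+1)}$ together with Cauchy--Schwarz and Plancherel, and handle $R_B$ (then $R_C$ by symmetry) via the identity $\hat f(m,0)=q^{-d}\hat F(m)$ combined with the trivial bound $|\hat S_t(0)|=q^{-d}|S_t|\le 2q^{-1}$ and the non-trivial bound on $\hat S_t(m)$. The only cosmetic difference is that the paper carries the factor $|S_t|$ explicitly for a step before invoking $|S_t|\le 2q^{d-1}$, whereas you substitute that bound immediately; the exponent bookkeeping and constants match exactly.
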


\begin{proof}
For $R_A$, applying the bound $|\hat{S}_t(m)|\leq 2q^{-\frac{d+1}{2}}$ for $m\neq 0$ and Cauchy-Schwarz, we get
\begin{align*}
|R_A| &= \left|q^{4d}\sum_{m,\ell\neq 0}
{\hat{S}_t(m)\hat{S}_t(\ell)\overline{\hat{f}(m,\ell)}\hat{g}(m,\ell)}\right| \\
&\leq 4q^{3d-1}\sum_{m,\ell\neq 0}{|\hat{f}(m,\ell)|\cdot |\hat{g}(m,\ell)| } \\
&\leq 4q^{3d-1}||\hat{f}||_2\cdot ||\hat{g}||_2
= 4q^{d-1}||f||_2\cdot ||g||_2
\end{align*}
For $R_B$, observe that
\begin{align*}
\hat{f}(m,0) & = 
	q^{-2d}\sum_{x,y\in\mathbb{F}_q^d}\chi(-x\cdot m)f(x,y) \\
	&=q^{-2d}\sum_x{\chi(-m\cdot x)F(x)}=q^{-d}\hat{F}(m)
\end{align*}
and therefore
\begin{align*}
|R_B| &= \left|q^{4d}\sum_{m\neq 0}\hat{S}_t(m)\hat{S}_t(0)\overline{\hat{f}(m,0)}g(m,0)\right| \\
&\leq  q^{3d}|S_t| \cdot \max_m|\hat{S}_t(m)|\sum_{m\neq 0}|\hat{f}(m,0)|\cdot |\hat{g}(m,0)| \\
&=q^{d}|S_t| \cdot \max_m|\hat{S}_t(m)|\sum_{m\neq 0}|\hat{F}(m)|\cdot |\hat{G}(m)| \\
&\leq 4q^{\frac{3d-3}{2}}||\hat{F}||_2\cdot ||\hat{G}||_2 =4q^{\frac{d-3}{2}}||F||_2\cdot ||G||_2
\end{align*}

The bound for $R_C$ follows with the same argument by symmetry. 
\end{proof}

\begin{proof}[Proof of Theorem \ref{functional}]
\begin{align*}
\sum_{x\cdot z=y\cdot w =t}{f(x,y)g(z,w)}
&=q^{-2}\sum_{\substack{x,y,z,w \\ \alpha,\beta\in\mathbb{F}_q}}{f(x,y)g(z,w)\chi(\alpha(x\cdot z-t))\chi(\beta(y\cdot w-t))} \nonumber \\
&=q^{-2}||f||_{1}||g||_{1}+R_D+R_E+R_F
\end{align*}
Where $R_D$ corresponds to the $\alpha,\beta\neq 0$ terms, $R_E$ corresponds to $\alpha\neq 0$, $\beta=0$, and $R_F$ corresponds to $\alpha=0$, $\beta\neq 0$.  By Cauchy-Schwarz,
\begin{align*}
R_D^2&=\left(q^{-2}\sum_{\substack{x,y,z,w \\ \alpha,\beta\neq 0}}{f(x,y)g(z,w)\chi(\alpha(x\cdot z-t))\chi(\beta(y\cdot w-t))}\right)^2 \nonumber \\
&\leq q^{-4}\left(\sum_{x,y}{f(x,y)^2}\right)\cdot \mathcal{S},
\end{align*}
where
\begin{align*}
\mathcal{S}&=\sum_{\substack{x,y,z,z',w,w' \\ \alpha,\alpha',\beta,\beta'\neq 0}}{\chi(\alpha(x\cdot z-t))\chi(-\alpha'(x\cdot z'-t))\chi(\beta(y\cdot w-t))\chi(-\beta'(y'\cdot w'-t))g(z,w)g(z',w')} \nonumber \\
&=\sum_{\substack{x,y,z,z',w,w' \\ \alpha,\alpha',\beta,\beta'}}{g(z,w)g(z',w')\chi(x\cdot (\alpha z-\alpha'z'))\chi(y\cdot (\beta w-\beta'w'))\chi(t(\alpha'-\alpha))\chi(t(\beta'-\beta))} \nonumber \\
&=q^{2d}\left(\sum_{\substack{z,z',w,w' \\ \alpha,\alpha',\beta,\beta'\neq 0 \\ \alpha z=\alpha'z', \ \beta w=\beta'w'}}{g(z,w)g(z',w')\chi(t(\alpha'-\alpha))\chi(t(\beta'-\beta))}\right) \nonumber \\
&=q^{2d}(I+II+III+IV) \nonumber \\
&\lesssim q^{2d}(I+II+III),
\end{align*}
where $I$ corresponds to the terms in the sum where $\alpha=\alpha'$, $\beta=\beta'$, $II$ corresponds to $\alpha\neq \alpha'$, $\beta\neq \beta'$, $III$ corresponds to $\alpha\neq \alpha'$, $\beta=\beta$, and $IV$ corresponds to $\alpha=\alpha'$, $\beta\neq \beta'$.  By symmetry, $III=IV$.  
\begin{align*}
I=\sum_{\substack{\alpha,\beta\neq 0 \\ z,w}}{g(z,w)^2}=q^2\sum_{z,w}{g(z,w)^2}
\end{align*}
For $II$ we make the substitution $a=\alpha/\alpha'$, $=\alpha'$, $u=\beta/\beta'$, $v=\beta'$, and applying Cauchy-Schwarz yields
\begin{align*}
II&=\sum_{\substack{z,w \\ a,u\neq 0,1}}\sum_{b,v\neq 0}{g(z,w)g(az,uw)\chi(tb(1-a))\chi(tv(1-u))} \nonumber \\
&=\sum_{\substack{z,w \\ a,u\neq 0,1}}{g(z,w)g(az,uw)} 
\leq \sum_{a,u\neq 0,1}\left(\sum_{z,w}{g(z,w)^2}\right)^{\frac{1}{2}}\left(\sum_{z,w}{g(az,uw)^2}\right)^{\frac{1}{2}} \nonumber \\
&=\sum_{a,u\neq 0,1}\sum_{z,w}{g(z,w)^2} \leq q^2\sum_{z,w}{g(z,w)^2}.
\end{align*}
We use the same subsitution for $III$, noting that $\beta=\beta'\neq 0$ forces $u=1$ and $v=\beta$.  
\begin{align*}
|III|&=\left|\sum_{\substack{z,w \\ a\neq 0,1 \\ b\neq 0}}\sum_{\beta\neq 0}{g(z,w)g(az,w)\chi(tb(1-a))}\right| \nonumber \\
&=q\left|\sum_{\substack{z,w \\ a\neq 0,1 \\ b\neq 0}}{g(z,w)g(az,w)\chi(tb(1-a))}\right| \nonumber \\
&=q\sum_{\substack{z,w \\ a\neq 0,1}}{g(z,w)g(az,w)} \nonumber \\
&\leq q^2\sum_{z,w}{g(z,w)^2}
\end{align*}
Putting this all together, we have
\begin{align*}
R_D\leq 2q^{d-1}||f||_{2}||g||_{2}
\end{align*}
Now to estimate $R_E$, which corresponds to the terms $\alpha\neq 0$, $\beta=0$,
\begin{align*}
R_E+q^{-2}||f||_{1}||g||_{1}
&= q^{-2}\sum_{x,y,z,w}\sum_{\alpha\in \mathbb{F}_q}{\chi(\alpha(x\cdot z-t))f(x,y)g(z,w)} \nonumber \\
&=q^{-1}\sum_{x,y,z,w}{f(x,y)g(z,w)D_t(x,z)} \nonumber \\
&=q^{-1}\sum_{x,z}{F(x)G(y)D_t(x,z)} \nonumber \\
\end{align*}
So by Theorem \ref{CHIKR functional}, 
\begin{align*}
|R_E|\leq q^{\frac{d-3}{2}}||F||_{2}||G||_{2}
\end{align*}
By symmetry, the same argument shows that
\begin{align*}
|R_F|\leq q^{\frac{d-3}{2}}||F'||_{2}||G'||_{2}
\end{align*}
\end{proof}
With Theorem \ref{distance cycles theorem} and Theorem \ref{functional}, the proof of Theorem \ref{main} is now a straightforward calculation.  In fact, for the rest of the argument the distance and dot-product cases will be unified.  For brevity we will use the following notation.
\begin{definition}
Let $P_k^{dist}(x,y)$ resp. $P_k^{prod}(x,y)$ be the number of paths of length $k$ from $x\in E$ to $y\in E$ in $\mathcal{G}_t^{dist}$ resp. $\mathcal{G}_t^{prod}$.  We will use $P_k^*(x,y)$ and $\mathcal{G}_t^*$ to refer simultaneously to the distance and dot-product versions; $P_k^*(x,y)$ may essentially be read as $P_k^{dist}(x,y)$ resp. $P_k^{prod}(x,y)$.  
\end{definition}
\begin{definition}
Furthermore, let $\mathcal{P}_k^*=\sum_{x,y}{P_k^*(x,y)}$, the number of paths of length $k$ from $x$ to $y$ on $\mathcal{G}_t^*$.  Similarly let $C_k^*$ be the number of cycles of length $k$ on $\mathcal{G}_t^*$.
\end{definition}
\begin{definition}
Let 
\begin{align*}
\mathcal{T}_t^{dist}(f,g)
=\sum_{||x-z||=||y-w||=t}{f(x,y)g(z,w)},
\end{align*}
\begin{align*}
\mathcal{T}_t^{prod}(f,g)=\sum_{x\cdot z=y\cdot w=t}{f(x,y)g(z,w)}
\end{align*}
and similarly to before $\mathcal{T}_t^*$ will refer to $\mathcal{T}_t^{dist}$ resp. $\mathcal{T}_t^{prod}$.
\end{definition}
Now Theorem \ref{distance cycles theorem} and Theorem \ref{functional} can be stated together concisely:
\begin{corollary}\label{concise}
\begin{align*}
\left|\mathcal{T}_t^*(f,g)-\frac{1}{q^2}||f||_1||g||_1\right|
\leq 3q^{-\frac{d+2}{2}}||f||_1||g||_1+4q^{d-1}||f||_2||g||_2+4q^{\frac{d-3}{2}}\left(||F||_2||G||_2+||F'||_2||G'||_2\right)
\end{align*}
and in the case $d=2$,
\begin{align*}
\left|\mathcal{T}_t^*(f,g)-\frac{1}{q^2}||f||_1||g||_1\right|
\leq 3q^{-3}||f||_1||g||_1+4q^{d-1}||f||_2||g||_2+4q^{\frac{d-3}{2}}\left(||F||_2||G||_2+||F'||_2||G'||_2\right)
\end{align*}
\end{corollary}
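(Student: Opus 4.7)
The plan is to observe that Corollary \ref{concise} is essentially a repackaging of Theorems \ref{distance cycles theorem} and \ref{functional}, obtained by taking the (pointwise) maximum of their right-hand sides in order to write a single bound valid in both the distance and dot-product settings. So the strategy is a two-case analysis on whether $\mathcal{T}_t^{*}$ denotes $\mathcal{T}_t^{dist}$ or $\mathcal{T}_t^{prod}$, and in each case compare the bound produced by the corresponding theorem against the bound claimed in the corollary.

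First I would treat the distance case, $\mathcal{T}_t^{*}=\mathcal{T}_t^{dist}$. Here the bound in Corollary \ref{concise} in the regime $d\ge 3$ is literally the conclusion of Theorem \ref{distance cycles theorem}, with the constants $3$, $4$, $4$ appearing in the same positions. In the $d=2$ regime, the corollary replaces $3q^{-\frac{d+2}{2}}=3q^{-2}$ by $3q^{-3}$; but that is precisely the constant that appears in the $d=2$ statement of Theorem \ref{distance cycles theorem}, reflecting the sharper estimate $|\mathcal{E}'|\le 3q$ for $|S_t|^2-q^{2d-2}$ when $d=2$. So in the distance case the corollary is an immediate transcription.

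Next I would handle the dot-product case, $\mathcal{T}_t^{*}=\mathcal{T}_t^{prod}$. Theorem \ref{functional} gives the bound
\[
\left|\mathcal{T}_t^{prod}(f,g)-q^{-2}\|f\|_1\|g\|_1\right|
\le 2q^{d-1}\|f\|_2\|g\|_2
+q^{\frac{d-3}{2}}\bigl(\|F\|_2\|G\|_2+\|F'\|_2\|G'\|_2\bigr),
\]
which one term by term is dominated by the right-hand side of the corollary: the $3q^{-\frac{d+2}{2}}\|f\|_1\|g\|_1$ (or $3q^{-3}\|f\|_1\|g\|_1$ in dimension two) is a nonnegative addition, the coefficient $2$ of $q^{d-1}\|f\|_2\|g\|_2$ is replaced by the larger $4$, and the coefficient $1$ of the $\|F\|_2\|G\|_2+\|F'\|_2\|G'\|_2$ term is replaced by the larger $4$. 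Hence the corollary holds in this case as well, with room to spare.

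Since the two theorems have already been proved in the excerpt, there is no genuine obstacle; the only thing to verify is the term-by-term comparison sketched above. If anything, the mild subtlety is cosmetic: one must be careful that the constants chosen in the corollary ($3$, $4$, $4$, and $3q^{-3}$ in dimension two) really do uniformly majorize the constants appearing in both underlying theorems, which is immediate from inspection. This unified form is what will be convenient in the inductive argument for Theorem \ref{main} in the sequel.
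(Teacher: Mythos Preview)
Your proposal is correct and matches the paper's treatment: the paper presents Corollary \ref{concise} as nothing more than a concise joint restatement of Theorems \ref{distance cycles theorem} and \ref{functional}, with no separate proof given. Your term-by-term verification that the dot-product bound is majorized by the stated constants, and that the distance bound (including the sharper $d=2$ term) is reproduced exactly, is precisely the content needed.
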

In order to prove Theorem \ref{main}, we will consider the case when $f(x,y)=P_k^*(x,y)$, $g(z,w)=P_{\ell}^*(z,w)$ for some $k,\ell$.  
\begin{lemma}\label{calculation}
Let $f(x,y)=P_k^*(x,y)$ with $k\geq 2$.  If $|E|\geq \frac{2k}{\log{2}}q^{\frac{d+1}{2}}$, then
\begin{align*}
||f||_{1}&=\mathcal{P}_k^*=\frac{|E|^{k+1}}{q^k}+\mathcal{E}_k \nonumber \\
||f||_{2}&=\sqrt{C_{2k}^*} \nonumber \\
||F||_{2}&=||F'||_{2}=\sqrt{\mathcal{P}_{2k}^*}\leq 2\frac{|E|^{k+\frac{1}{2}}}{q^k}
\end{align*}
Where
\begin{align*}
|\mathcal{E}_k|\leq \frac{2k}{\log{2}}q^{\frac{d+1}{2}}\frac{|E|^k}{q^k}
\end{align*}
\end{lemma}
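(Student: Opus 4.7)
The plan is to unpack each of the three quantities in terms of paths, walks, or cycles on $\mathcal{G}_t^*$, and then invoke Theorem \ref{Chains} (or its distance analog from \cite{BCCHIP}) to obtain the quantitative bounds.

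First, I would handle $\|f\|_1$ directly: by definition $\sum_{x,y} P_k^*(x,y) = \mathcal{P}_k^*$. Applying Theorem \ref{Chains} (and the corresponding distance estimate, whose constant carries the extra factor of $2$ coming from Lemma \ref{sphere bounds nonzero}) yields exactly
\begin{equation*}
\Bigl|\mathcal{P}_k^* - \tfrac{|E|^{k+1}}{q^k}\Bigr| \leq \tfrac{2k}{\log 2}\, q^{(d+1)/2}\tfrac{|E|^k}{q^k},
\end{equation*}
which is the bound on $\mathcal{E}_k$. The hypothesis $|E| \geq \tfrac{2k}{\log 2}\,q^{(d+1)/2}$ guarantees the path-counting theorem applies.

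Next, for $\|f\|_2$ I would give a simple bijective argument. The square $\|f\|_2^2 = \sum_{x,y} P_k^*(x,y)^2$ counts ordered pairs of paths $(v_0,\ldots,v_k)$ and $(w_0,\ldots,w_k)$ in $\mathcal{G}_t^*$ with $v_0=w_0$ and $v_k=w_k$. Since the distance/dot-product relation is symmetric, concatenating the first path with the second one reversed produces the tuple $(v_0,v_1,\ldots,v_k,w_{k-1},\ldots,w_1)$, which is a closed cycle of length $2k$ in $\mathcal{G}_t^*$. This assignment is a bijection onto $C_{2k}^*$, giving $\|f\|_2 = \sqrt{C_{2k}^*}$.

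For $\|F\|_2$ (and $\|F'\|_2$ by symmetry), note $F(x) = \sum_y P_k^*(x,y)$ is the total number of length-$k$ paths starting at $x$. Then $\sum_x F(x)^2$ counts ordered pairs of length-$k$ paths with a common starting vertex; reversing the first and concatenating with the second yields a walk of length $2k$ on $\mathcal{G}_t^*$, so $\|F\|_2^2 = \mathcal{P}_{2k}^*$. To conclude the upper bound $\|F\|_2 \leq 2\,|E|^{k+1/2}/q^k$, I would apply Theorem \ref{Chains} at index $2k$: the hypothesis $|E| \geq \tfrac{2k}{\log 2}\,q^{(d+1)/2}$ is strong enough to conclude $\mathcal{P}_{2k}^* \leq 4\,|E|^{2k+1}/q^{2k}$ after absorbing the remainder term into the main term.

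No step is really hard; the only mild subtlety is recognizing that the concatenation/reversal argument needs the underlying relation ($\|x-y\|=t$ or $x\cdot y=t$) to be symmetric so that reversed paths remain valid, and keeping track of the constant $\tfrac{2k}{\log 2}$ versus $\tfrac{k}{\log 2}$ between the distance and dot-product versions of the chains theorem.
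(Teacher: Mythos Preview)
Your proposal is correct and follows essentially the same approach as the paper: the paper's proof is the single sentence ``Unravel definitions, apply Theorem \ref{Chains} in the dot-product case and the analogous result from \cite{BCCHIP} in the distance case, and a degree 1 Taylor approximation of $\sqrt{x}$,'' and your write-up is exactly the unpacking of that sentence. The only cosmetic difference is that where you bound $\sqrt{\mathcal{P}_{2k}^*}$ by absorbing the error term directly (giving $\mathcal{P}_{2k}^*\leq 2|E|^{2k+1}/q^{2k}$ and then taking a square root), the paper alludes to a first-order Taylor estimate for $\sqrt{1+x}$; both arrive at the same constant.
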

\begin{proof}
Unravel definitions, apply Theorem \ref{Chains} in the dot-product case and the analogous result from \cite{BCCHIP} in the distance case, and a degree 1 Taylor approximation of $\sqrt{x}$.
\end{proof}
\begin{lemma}\label{calculation2}
Let $\ell\geq k$.  If $|E|\geq \frac{2\ell}{\log{2}} q^{\frac{d+1}{2}}$, then
\begin{align*}
\left|\frac{||P_k^*||_1\cdot ||P_{\ell}^*||_1}{q^2}-\frac{|E|^{k+\ell+2}}{q^{k+\ell+2}}\right|
\leq (8k+4\ell)q^{\frac{d+1}{2}}\frac{|E|^{k+\ell+1}}{q^{k+\ell+2}}
\end{align*}
\end{lemma}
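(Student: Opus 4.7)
The plan is to apply Lemma \ref{calculation} to both factors and expand the resulting product, treating the main term and three error terms separately. Under the hypothesis $|E|\geq \frac{2\ell}{\log 2}q^{\frac{d+1}{2}}$ (which, since $\ell\geq k$, also implies the hypothesis of Lemma \ref{calculation} at index $k$), we may write
\begin{align*}
\|P_k^*\|_1 = \frac{|E|^{k+1}}{q^k} + \mathcal{E}_k, \qquad \|P_\ell^*\|_1 = \frac{|E|^{\ell+1}}{q^\ell} + \mathcal{E}_\ell,
\end{align*}
with $|\mathcal{E}_k|\leq \frac{2k}{\log 2}q^{\frac{d+1}{2}}\frac{|E|^k}{q^k}$ and analogously for $\mathcal{E}_\ell$.

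Multiplying and dividing by $q^2$ yields
\begin{align*}
\frac{\|P_k^*\|_1\|P_\ell^*\|_1}{q^2} = \frac{|E|^{k+\ell+2}}{q^{k+\ell+2}} + \frac{1}{q^2}\left(\frac{|E|^{k+1}}{q^k}\mathcal{E}_\ell + \frac{|E|^{\ell+1}}{q^\ell}\mathcal{E}_k + \mathcal{E}_k\mathcal{E}_\ell\right),
\end{align*}
so the problem reduces to bounding the three error contributions by multiples of $q^{\frac{d+1}{2}}|E|^{k+\ell+1}/q^{k+\ell+2}$. The first two are immediate: inserting the bound on $|\mathcal{E}_\ell|$ gives a contribution of $\frac{2\ell}{\log 2}q^{\frac{d+1}{2}}\frac{|E|^{k+\ell+1}}{q^{k+\ell+2}}$, and symmetrically the second gives a $\frac{2k}{\log 2}$ coefficient.

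For the cross term $\mathcal{E}_k\mathcal{E}_\ell$, I would use the size hypothesis to trade one factor of $q^{\frac{d+1}{2}}$ for a factor of $|E|$: specifically, $q^{\frac{d+1}{2}}/|E|\leq \frac{\log 2}{2\ell}$, so
\begin{align*}
\frac{|\mathcal{E}_k\mathcal{E}_\ell|}{q^2} \leq \frac{4k\ell}{(\log 2)^2}q^{d+1}\frac{|E|^{k+\ell}}{q^{k+\ell+2}} \leq \frac{2k}{\log 2}q^{\frac{d+1}{2}}\frac{|E|^{k+\ell+1}}{q^{k+\ell+2}}.
\end{align*}
Summing the three contributions gives a coefficient of $\frac{4k+2\ell}{\log 2}$, and using $1/\log 2<2$ to convert yields the stated bound $(8k+4\ell)$. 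There is no real analytic obstacle here — the content is entirely in Lemma \ref{calculation}; the only care required is the bookkeeping of the cross term and checking that the constant $2/\log 2$ absorbs correctly into the promised $(8k+4\ell)$.
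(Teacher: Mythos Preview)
Your proof is correct and follows essentially the same approach as the paper: expand the product of the two path-counting estimates into a main term plus three error contributions, then use the size hypothesis $q^{\frac{d+1}{2}}/|E|\leq \frac{\log 2}{2\ell}$ to absorb the quadratic cross term into the linear form. Your bookkeeping of the $\log 2$ constants is in fact slightly tidier than the paper's, which rounds the intermediate constants to $4(k+\ell)$ and $16k\ell$ before applying the size hypothesis.
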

\begin{proof}
It follows directly from Theorem \ref{Chains} in the dot-product case and the analogous result from \cite{BCCHIP} in the distance case that
\begin{align*}
\left|\frac{||P_k^*||_1\cdot ||P_{\ell}^*||_1}{q^2}-\frac{|E|^{k+\ell+2}}{q^{k+\ell+2}}\right|\leq 4(k+\ell)q^{\frac{d+1}{2}}\frac{|E|^{k+\ell+1}}{q^{k+\ell+2}}+16k\ell q^{d-1}\frac{|E|^{k+\ell}}{q^{k+\ell}}
\end{align*}
Using the fact that $q^{\frac{d+1}{2}}\leq \frac{1}{4\ell}|E|$ finishes the proof of the lemma.  
\end{proof}
We are now ready to prove the main theorem.
\begin{proof}[Proof of Theorem \ref{main}]
We proceed by induction.  Using Corollary \ref{concise} and Lemma \ref{calculation} and Lemma \ref{calculation2}, as well as the estimates from \cite{HI08} and \cite{IR07} for the number of edges in $\mathcal{G}_t^*$, we start by computing $C_4^*$ and $C_5^*$.  In order to handle the $d=2$ case and the $d\geq 3$ case at once, let
\begin{align*}
\gamma=\left\{
\begin{array}{ll}
-1 & :d=2 \\
-\frac{d-2}{2} & :d\geq 3
\end{array}
\right.
\end{align*}
Then we get
\begin{align*}
\left|C_4^*-\frac{|E|^4}{q^4}\right|
&=\left|\mathcal{T}_t^*(P_1^*,P_1^*)-\frac{|E|^4}{q^4}\right|
\leq \left|\mathcal{T}_t^*(P_1^*,P_1^*)-\frac{||P_1^*||_1^2}{q^2}\right|+\left|\frac{||P_1^*||_1^2}{q^2}-\frac{|E|^4}{q^4}\right| \\
&\leq 3q^{\gamma-2}{\mathcal{P}_1^*}^2+4q^{d-1}\mathcal{P}_1^*+8q^{\frac{d-3}{2}}\mathcal{P}_2^*+12q^{\frac{d-7}{2}}|E|^3 \\
&\leq \frac{|E|^4}{q^4}\left(12q^{\gamma}+8\frac{q^{d+2}}{|E|^2}+28\frac{q^{\frac{d+1}{2}}}{|E|}\right)
\end{align*}
In particular, under the hypotheses of the theorem we have $C_4^*\leq 2\frac{|E|^4}{q^4}$.  Therefore,
\begin{align*}
\left|C_5^*-\frac{|E|^5}{q^5}\right|
&=\left|\mathcal{T}_t^*(P_1^*,P_2^*)-\frac{|E|^5}{q^5}\right|
\leq \left|\mathcal{T}_t^*(P_1^*,P_2^*)-\frac{||P_1^*||_1||P_2^*||_1}{q^2}\right|+\left|\frac{||P_1^*||_1||P_2^*||_1}{q^2}-\frac{|E|^5}{q^5}\right| \\
&\leq 3q^{\gamma-2}\mathcal{P}_1^*\mathcal{P}_2^*+4q^{d-1}\sqrt{\mathcal{P}_1^*C_4^*}+8q^{\frac{d-3}{2}}\sqrt{\mathcal{P}_2^*\mathcal{P}_4^*}+16q^{\frac{d-9}{2}}|E|^4 \\
&\leq \frac{|E|^5}{q^5}\left(12q^{\gamma}+8\frac{q^{\frac{2d+3}{2}}}{|E|^2}+32\frac{q^{\frac{d+1}{2}}}{|E|}\right)
\end{align*}
With an estimate for $C_4^*$, we can handle any even length inductively:
\begin{align*}
\left|C_{2k+2}^*-\frac{|E|^{2k+2}}{q^{2k+2}}\right|
&=\left|\mathcal{T}_t^*(P_k^*,P_k^*)-\frac{|E|^{2k+2}}{q^{2k+2}}\right| \\
&\leq \left|\mathcal{T}_t^*(P_k^*,P_k^*)-\frac{||P_k^*||_1||P_k^*||_1}{q^2}\right|+\left|\frac{||P_k^*||_1||P_k^*||_1}{q^2}-\frac{|E|^{2k+2}}{q^{2k+2}}\right| \\
&\leq 3q^{\gamma-2}{\mathcal{P}_k^*}^2+4q^{d-1}C_{2k}^*+8q^{\frac{d-3}{2}}\mathcal{P}_{2k}^*+12kq^{\frac{d+1}{2}}\frac{|E|^{2k+1}}{q^{2k+2}} \\
&\leq \frac{|E|^{2k+2}}{q^{2k+2}}\left(12q^{\gamma}+8\frac{q^{d+1}}{|E|^2}+(16+12k)\frac{q^{\frac{d+1}{2}}}{|E|}\right)
\end{align*}
Finally, noting that under the hypotheses of the theorem we can conclude that $C_n^*\leq 2\frac{|E|^n}{q^n}$ for any even $n\geq 4$, we can handle the odd case:
\begin{align*}
\left|C_{2k+1}^*-\frac{|E|^{2k+1}}{q^{2k+1}}\right|
&=\left|\mathcal{T}_t^*(P_k^*,P_{k-1}^*)-\frac{|E|^{2k+1}}{q^{2k+1}}\right| \\
&\leq \left|\mathcal{T}_t^*(P_k^*,P_{k-1}^*)-\frac{||P_k^*||_1||P_{k-1}^*||_1}{q^2}\right|+\left|\frac{||P_k^*||_1||P_{k-1}^*||_1}{q^2}-\frac{|E|^{2k+1}}{q^{2k+1}}\right| \\
&\leq 3q^{\gamma-2}\mathcal{P}_k^*\mathcal{P}_{k-1}^*+4q^{d-1}\sqrt{C_{2k}^*C_{2k-2}^*}+8q^{\frac{d-3}{2}}\sqrt{\mathcal{P}_{2k}^*\mathcal{P}_{2k-2}^*}+(12k-8)q^{\frac{d+1}{2}}\frac{|E|^{2k+1}}{q^{2k+2}} \\
&\leq \frac{|E|^{2k+1}}{q^{2k+1}}\left(12q^{\gamma}+8\frac{q^{d+1}}{|E|^2}+(24+12k)\frac{q^{\frac{d+1}{2}}}{|E|}\right)
\end{align*}
\end{proof}
The proof for Theorem \ref{main2} is quite similar, but we will need the following lemma.
\begin{lemma}\label{bound}
Let $\psi_k(\alpha)=(k-1)\alpha-k+2$.  Then whenever $\frac{k-2}{k-1}\leq \alpha<1$ and $|E|\geq q^{\frac{d+2-\alpha}{2}}$, for sufficiently large $q$ we have
\begin{align}
C_{2k}^*\leq A_k\frac{|E|^{2k}}{q^{2k}}q^{\psi_k(\alpha)}
\end{align}
Where $A_k=10\cdot 6^{k-2}$
\end{lemma}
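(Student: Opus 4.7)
The plan is to prove the lemma by induction on $k \geq 2$, applying Corollary \ref{concise} at each step with the choice $f = g = P_k^*$, and invoking Theorem \ref{Chains} (and its distance counterpart from \cite{BCCHIP}) to control the ambient path counts. The combinatorial identities that drive the recursion are that $\mathcal{T}_t^*(P_k^*, P_k^*) = C_{2k+2}^*$ (a $(2k+2)$-cycle tuple is uniquely the concatenation of two ordered $k$-paths joined by the two bracketing edges), together with $\|P_k^*\|_1 = \mathcal{P}_k^*$, $\|P_k^*\|_2^2 = C_{2k}^*$ (two $k$-paths with common endpoints form a $2k$-cycle), and $\|F\|_2^2 = \|F'\|_2^2 = \mathcal{P}_{2k}^*$ (two $k$-paths sharing an endpoint concatenate to a $2k$-path, using reversibility so $F = F'$).

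For the base case $k = 2$, set $f = g = P_1^*$, the edge indicator, so that Corollary \ref{concise} controls $C_4^* = \mathcal{T}_t^*(P_1^*, P_1^*)$ in terms of $(\mathcal{P}_1^*)^2/q^2$ plus three error terms. Using $\mathcal{P}_1^* = (1+o(1))|E|^2/q$ from \cite{HI08}, \cite{IR07} and $\mathcal{P}_2^* \lesssim |E|^3/q^2$ from Theorem \ref{Chains}, one checks term by term: the main term contributes $(1+o(1))|E|^4/q^4$; the $3q^{\gamma-2}(\mathcal{P}_1^*)^2$ term carries a factor $q^\gamma$ and is negligible; the $4q^{d-1}\mathcal{P}_1^*$ term equals $8 q^{d+2} |E|^4/(|E|^2 q^4)$, which is at most $8 |E|^4 q^\alpha /q^4$ by the hypothesis $|E|^2 \geq q^{d+2-\alpha}$; the $8q^{(d-3)/2}\mathcal{P}_2^*$ term carries a factor $q^{(\alpha-1)/2}$ and vanishes since $\alpha < 1$. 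Summing gives $C_4^* \leq (9 + o(1))|E|^4 q^\alpha/q^4$, well below $A_2 |E|^4 q^{\psi_2(\alpha)}/q^4 = 10|E|^4 q^\alpha/q^4$ for $q$ large.

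The inductive step from $k$ to $k+1$ is structurally identical. Substituting the norms above into Corollary \ref{concise} gives
\begin{equation*}
C_{2k+2}^* \leq \frac{(\mathcal{P}_k^*)^2}{q^2} + 3 q^{\gamma-2}(\mathcal{P}_k^*)^2 + 4 q^{d-1} C_{2k}^* + 8 q^{\frac{d-3}{2}} \mathcal{P}_{2k}^*.
\end{equation*}
Theorem \ref{Chains} yields $(\mathcal{P}_k^*)^2/q^2 = (1+o(1))|E|^{2k+2}/q^{2k+2}$, the $q^\gamma$-term is negligible, and the last term is $O(q^{(\alpha-1)/2})|E|^{2k+2}/q^{2k+2}$ which vanishes as $q \to \infty$. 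The crucial third term, bounded by the inductive hypothesis and the estimate $q^{d+1}/|E|^2 \leq q^{\alpha-1}$, becomes
\begin{equation*}
4 q^{d-1} C_{2k}^* \leq 4 A_k \frac{|E|^{2k+2}}{q^{2k+2}} q^{\psi_k(\alpha) + \alpha - 1} = 4 A_k \frac{|E|^{2k+2}}{q^{2k+2}} q^{\psi_{k+1}(\alpha)}.
\end{equation*}

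This is the heart of the argument: the function $\psi_k(\alpha) = (k-1)\alpha - k + 2$ is precisely the linear solution of the recurrence $\psi_{k+1} - \psi_k = \alpha - 1$ with $\psi_2(\alpha) = \alpha$, so it is exactly calibrated to the gain $q^{\alpha-1}$ from the hypothesis on $|E|$. Because $\psi_{k+1}(\alpha) \geq 0$ on the allowed range $\alpha \geq (k-1)/k$, the factor $q^{\psi_{k+1}(\alpha)} \geq 1$ absorbs the three non-inductive contributions into a total constant of order $4 A_k + 1 + o(1)$, which is comfortably below $A_{k+1} = 6 A_k$ for every $k \geq 2$ once $q$ is large enough. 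The main obstacle is not analytic but bookkeeping: once one correctly identifies the $L^2$-norms of $P_k^*$ and its marginals with $C_{2k}^*$ and $\mathcal{P}_{2k}^*$ respectively, the entire estimate is driven by the linear recurrence satisfied by $\psi_k(\alpha)$.
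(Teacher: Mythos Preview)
Your proof is correct and follows essentially the same route as the paper: induction on $k$ via Corollary~\ref{concise} applied to $f=g=P_k^*$, identifying $\|P_k^*\|_2^2=C_{2k}^*$ and $\|F\|_2^2=\mathcal{P}_{2k}^*$, and using the recurrence $\psi_{k+1}(\alpha)=\psi_k(\alpha)+\alpha-1$ together with $\psi_{k+1}(\alpha)\geq 0$ on the allowed range to absorb the lower-order terms into $A_{k+1}=6A_k$. The only cosmetic difference is that the paper first records the intermediate bound $C_{2k+2}^*\leq \frac{|E|^{2k+2}}{q^{2k+2}}\bigl(1+12q^{\gamma}+4A_k q^{d+1+\psi_k(\alpha)}/|E|^2+(16+12k)q^{(d+1)/2}/|E|\bigr)$ before simplifying, whereas you keep $(\mathcal{P}_k^*)^2/q^2$ as a single $(1+o(1))|E|^{2k+2}/q^{2k+2}$ term.
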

\begin{proof}
For $k=2$, if $|E|\geq q^{\frac{d+2-\alpha}{2}}$ then for sufficiently large $q$,
\begin{align*}
C_4^*
&\leq \frac{|E|^4}{q^4}\left(1+12q^{\gamma}+8\frac{q^{d+2}}{|E|^2}+28\frac{q^{\frac{d+1}{2}}}{|E|}\right) \\
&\leq \frac{|E|^4}{q^4}\left(1+12q^{\gamma}+8q^{\alpha}+28q^{\frac{\alpha}{2}-\frac{1}{2}}\right) 
\leq 10\frac{|E|^4}{q^4}q^{\alpha},
\end{align*}
and so the result holds for $A_2=10$.  Assuming it holds for $C_{2k}^*$, we find for sufficiently large $q$ that
\begin{align*}
C_{2k+2}^*
&\leq \frac{|E|^{2k+2}}{q^{2k+2}}+3q^{\gamma-2}{\mathcal{P}_k^*}^2+4q^{d-1}C_{2k}^*+8q^{\frac{d-3}{2}}\mathcal{P}_{2k}^*+12kq^{\frac{d+1}{2}}\frac{|E|^{2k+1}}{q^{2k+2}} \\
&\leq \frac{|E|^{2k+2}}{q^{2k+2}}\left(1+12q^{\gamma}+4A_k\frac{q^{d+1+\psi_k(\alpha)}}{|E|^2}+(16+12k)\frac{q^{\frac{d+1}{2}}}{|E|}\right) \\
&\leq \frac{|E|^{2k+2}}{q^{2k+2}}\left(1+12q^{\gamma}+4A_kq^{\psi_k(\alpha)+\alpha-1}+(16+12k)q^{\frac{\alpha}{2}-\frac{1}{2}}\right) \\
&\leq 6A_k\frac{|E|^{2k+2}}{q^{2k+2}}q^{\psi_{k+1}(\alpha)}
\end{align*}
and thus it holds for $C_{2k+2}$ as long as $\psi_{k+1}(\alpha)\geq 0$, which is the case when $\alpha\geq \frac{k-1}{k}$. 
\end{proof}
\begin{proof}[Proof of Theorem \ref{main2}]
The argument is similar to the proof of Theorem \ref{main}.  From the estimate for $C_4^*$, we see that if $|E|\geq q^{\frac{d+2-\alpha}{2}}$ for $0\leq \alpha<1$, then $C_4^*\leq A_2\frac{|E|^4}{q^4}q^{\alpha}$.  In that case,
\begin{align*}
\left|C_5^*-\frac{|E|^5}{q^5}\right|
&\leq 3q^{\gamma-2}\mathcal{P}_1^*\mathcal{P}_2^*+4q^{d-1}\sqrt{\mathcal{P}_1^*C_4^*}+8q^{\frac{d-3}{2}}\sqrt{\mathcal{P}_2^*\mathcal{P}_4^*}+16q^{\frac{d-9}{2}}|E|^4 \\
&\leq \frac{|E|^5}{q^5}\left(12q^{\gamma}+8A_k\frac{q^{\frac{2d+3+\alpha}{2}}}{|E|^2}+32\frac{q^{\frac{d+1}{2}}}{|E|}\right)
\end{align*}
In particular, if $\alpha=\frac{1}{3}-\delta$ for $0<\delta<\frac{2}{9}$, then \begin{align*}
\left|C_5^*-\frac{|E|^5}{q^5}\right|
&\leq \frac{|E|^5}{q^5}\left(12q^{\gamma}+8A_kq^{-\frac{3}{2}\delta}+32q^{-\frac{\delta}{2}-\frac{1}{3}}\right) \\
&\leq (44+8A_k)\frac{|E|^5}{q^5}q^{-\frac{3}{2}\delta}
\end{align*}
  For the even case, we use Lemma \ref{bound} to find that
\begin{align*}
\left|C_{2k+2}^*-\frac{|E|^{2k+2}}{q^{2k+2}}\right|
&\leq 3q^{\gamma-2}{\mathcal{P}_k^*}^2+4q^{d-1}C_{2k}^*+8q^{\frac{d-3}{2}}\mathcal{P}_{2k}^*+12kq^{\frac{d+1}{2}}\frac{|E|^{2k+1}}{q^{2k+2}} \\
&\leq \frac{|E|^{2k+2}}{q^{2k+2}}\left(12q^{\gamma}+8A_k\frac{q^{d+1+\psi_k(\alpha)}}{|E|^2}+(16+12k)\frac{q^{\frac{d+1}{2}}}{|E|}\right)
\end{align*}
and in particular, if $\alpha=\frac{k-1}{k}-\delta$ for $0<\delta<\frac{1}{2k^2}$ then 
\begin{align*}
\left|C_{2k+2}^*-\frac{|E|^{2k+2}}{q^{2k+2}}\right|
&\leq \frac{|E|^{2k+2}}{q^{2k+2}}\left(12q^{\gamma}+8A_kq^{-k\delta}+(16+12k)q^{-\frac{1}{2k}-\frac{\delta}{2}}\right) \\
&\leq (28+8A_k+12k)\frac{|E|^{2k+2}}{q^{2k+2}}q^{-k\delta}
\end{align*}
And finally the odd case; if $\alpha\geq \frac{k-2}{k-1}$, then
\begin{align*}
\left|C_{2k+1}^*-\frac{|E|^{2k+1}}{q^{2k+1}}\right|
&\leq 3q^{\gamma-2}\mathcal{P}_k^*\mathcal{P}_{k-1}^*+4q^{d-1}\sqrt{C_{2k}^*C_{2k-2}^*}+8q^{\frac{d-3}{2}}\sqrt{\mathcal{P}_{2k}^*\mathcal{P}_{2k-2}^*}+(12k-8)q^{\frac{d+1}{2}}\frac{|E|^{2k+1}}{q^{2k+2}} \\
&\leq \frac{|E|^{2k+1}}{q^{2k+1}}\left(12q^{\gamma}+8A_k\frac{q^{d+1+\frac{1}{2}((2k-3)\alpha-2k+5)}}{|E|^2}+(24+12k)\frac{q^{\frac{d+1}{2}}}{|E|}\right)
\end{align*}
Note that $\frac{2k-3}{2k-1}\geq \frac{k-2}{k-1}$ for all $k$.  Thus, if $\alpha=\frac{2k-3}{2k-1}-\delta$ for 
\begin{align*}
0<\delta\leq \frac{1}{(k-1)(2k-1)}
\end{align*}
then
\begin{align*}
\left|C_{2k+1}^*-\frac{|E|^{2k+1}}{q^{2k+1}}\right|
&\leq \frac{|E|^{2k+1}}{q^{2k+1}}\left(12q^{\gamma}+8A_kq^{-\left(k-\frac{1}{2}\right)\delta}+(24+12k)q^{-\frac{2}{2k-1}-\frac{\delta}{2}}\right) \\
&\leq (36+8A_k+12k)\frac{|E|^{2k+1}}{q^{2k+1}}q^{-\left(k-\frac{1}{2}\right)\delta}
\end{align*}

\end{proof}
\section{Existence of Non-Degenerate Cycles}\label{degenerate}

\vskip.125in

The following bound follows directly from Theorem \ref{main} in the case $n=4$, and Theorem \ref{main2} for $n\geq 5$:
\begin{corollary}\label{mainthmcor}
For $n\geq 4$ and $q$ sufficiently large, if
\begin{align*}
|E|\geq \left\{\begin{array}{ll}
q^{\frac{1}{2}(d+2-\frac{k-2}{k-1}+\delta)} & :n=2k \\
q^{\frac{1}{2}(d+2-\frac{2k-3}{2k-1}+\delta)} & :n=2k+1
\end{array}
\right.
\end{align*}
then
\begin{align*}
\left|C_n^*-\frac{|E|^n}{q^n}\right|\leq 
K_n\frac{|E|^n}{q^n}q^{-\left(\frac{n}{2}-1\right)\delta}
\end{align*}
where $K_4=48$, and for $n\geq 5$ $K_n=36+80\cdot 6^{\left\lfloor \frac{n}{2}\right\rfloor-2}+12\left\lfloor\frac{n}{2}\right\rfloor$.
\end{corollary}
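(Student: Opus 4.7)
The plan is to split into $n=4$ and $n\geq 5$, each a matter of unpacking an earlier theorem under the specialized hypothesis. For $n\geq 5$, the corollary is in fact a pointwise restatement of Theorem \ref{main2}: the size hypothesis on $|E|$ is identical, the constant $K_n=36+80\cdot 6^{\lfloor n/2\rfloor-2}+12\lfloor n/2\rfloor$ agrees, and the exponent $-(n/2-1)\delta$ matches (for even $n=2k+2$ this is $-k\delta$, and for odd $n=2k+1$ it is $-(k-\frac{1}{2})\delta$, exactly the rates established in that theorem's proof). So for this range nothing new is needed.

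For $n=4$ I would apply the unconditional $C_4^*$ estimate from Theorem \ref{main}. With $k=2$ the hypothesis reduces to $|E|\geq q^{(d+2+\delta)/2}$, from which $q^{d+2}/|E|^2\leq q^{-\delta}$ and $q^{(d+1)/2}/|E|\leq q^{-(1+\delta)/2}$. Substituting into the $C_4^*$ bound in Theorem \ref{main} yields three error contributions of sizes $12q^\gamma$, $8q^{-\delta}$, and $28q^{-(1+\delta)/2}$ respectively. Since $\gamma<0$ and $\delta<1$, the first and third are $o(q^{-\delta})$ as $q\to\infty$, so for $q$ sufficiently large each term is dominated by its coefficient times $q^{-\delta}$; summing the coefficients $12+8+28=48=K_4$ recovers the claim with the correct constant and the correct exponent $-\delta = -(4/2 - 1)\delta$.

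There is no real obstacle here; the only bookkeeping point is confirming that in the $n=4$ case the subleading contributions from $q^\gamma$ and $q^{-(1+\delta)/2}$ can indeed be absorbed into a single $q^{-\delta}$ factor. This follows from $\gamma<0$ (both when $d=2$, where $\gamma=-1$, and when $d\geq 3$, where $\gamma=-(d-2)/2$) together with $0<\delta<1$, so no delicate estimation is required, and the proof is complete.
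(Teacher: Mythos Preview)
Your proposal is correct and mirrors the paper's own argument exactly: the paper simply states that the corollary ``follows directly from Theorem \ref{main} in the case $n=4$, and Theorem \ref{main2} for $n\geq 5$,'' which is precisely the split you carry out. One small remark: your justification that $12q^{\gamma}=o(q^{-\delta})$ needs $\gamma<-\delta$, not merely $\gamma<0$; since $\gamma=-\tfrac{1}{2}$ when $d=3$, this requires $\delta<\tfrac{1}{2}$ rather than just $\delta<1$, but this is well within the implicit small-$\delta$ regime (and the paper does not spell this out either).
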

We will show that $\mathcal{G}_t^*$ admits $\sim\frac{|E|^k}{q^k}$ non-degenerate $k$-cycles under the hypotheses of Corollary \ref{mainthmcor}.  By non-degenerate, we mean a cycle in $\mathcal{G}_t^*$ with $k$ distinct vertices and edges.  Our estimates for $C_k^*$ do not on their own imply the existence of even one non-degenerate cycle, because our estimates include all the degenerate cycles as well as non-degenerate.  In order to prove Theorem \ref{nondegenerate}, we reduce to a subset of $E$ which we can show admits $\sim\frac{|E|^k}{q^k}$ non-degenerate $k$-cycles, which implies that $E$ also admits $\sim\frac{|E|^k}{q^k}$ non-degenerate $k$-cycles.  
\begin{definition}
For $\lambda=\lambda(q)>0$ to be specified later (with the property that $\lambda(q)\to\infty$), let
\begin{align*}
E^{dist}&=\left\{x\in E: \ E\ast S_t(x)\leq \lambda \frac{|E|}{q}\right\}, \nonumber \\
E^{prod}&=\left\{x\in E: \ \sum_y{E(y)D_t(x,y)}\leq \lambda \frac{|E|}{q}\right\}
\end{align*}
As before, we use $E^*$ to mean $E^{dist}$ resp. $E^{prod}$.  
\end{definition}
Then by Chebyshev's inequality $|E\setminus E^*|\leq 2\frac{|E|}{\lambda}$ for $q$ sufficiently large, since
\begin{align*}
|E\setminus E^{dist}|\leq \frac{q}{\lambda|E|}\sum_x{E\ast S_t(x)}\leq 2\frac{|E|}{\lambda}, \nonumber \\
|E\setminus E^{prod}|\leq \frac{q}{\lambda|E|}\sum_{x,y}{E(x)E(y)D_t(x,y)}\leq 2\frac{|E|}{\lambda}
\end{align*}
Note that any degenerate cycle is a connected graph on at most $k-1$ vertices, so it has a spanning tree which has the same number of vertices.  
\begin{definition}
For a tree $T$ with vertex set $V$ and $|V|\leq n-1$, let $n_T^*$ be the number of edge-preserving maps of $T$ into $\mathcal{G}_t^*(E^*)$, and let $G_T$ be the number of graphs on $V$ with at most $n-1$ vertices for which $T$ is a spanning tree.  
\end{definition}
Then the number of degenerate $n$-cycles in $\mathcal{G}_t^*(E^*)$ is bounded by $\sum_T{n_T^*G_T}$, where the sum is taken over trees with at most $n-1$ vertices.  
\begin{lemma}\label{tree lemma}
If $T$ has $r+1$ vertices and hence $r$ edges, then 
\begin{align*}
\left|n_T^*-\frac{|E|^{r+1}}{q^r}\right|
\leq 4r\left(\lambda^{-1}+\lambda^{\frac{r-1}{2}}\frac{q^{\frac{d+1}{2}}}{|E|}\right)
\end{align*}
\end{lemma}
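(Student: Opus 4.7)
The plan is to argue by induction on $r$, the number of edges of $T$; I will read the stated bound as carrying the additional factor $|E|^{r+1}/q^r$, which is the form needed later in Theorem~\ref{tree counting} and Theorem~\ref{nondegenerate}. The main analytic input, uniformly for the distance and the dot-product cases, is the bilinear edge estimate
\begin{align*}
\sum_{x,y}f(x)g(y)D_t(x,y)=q^{-1}||f||_1||g||_1+O\!\left(q^{(d-1)/2}||f||_2||g||_2\right),
\end{align*}
which is Theorem~\ref{CHIKR functional} in the dot-product setting and the functional version of (\ref{IRidentity})--(\ref{distanceremainderestimate}) in the distance setting. The second ingredient, crucial for the error analysis, is the pointwise degree bound $\sum_y E(y)D_t(x,y)\le\lambda|E|/q$ enjoyed by every $x\in E^*$ directly from its definition.

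For the base case $r=1$, applying the bilinear estimate to $f=g=E^*$ yields $n_T^*=|E^*|^2/q+O(q^{(d-1)/2}|E^*|)$, and replacing $|E^*|$ by $|E|$ at a cost bounded by $|E\setminus E^*|\le 2|E|/\lambda$ (the Chebyshev bound already established in the paper) delivers the claim with constant $4$. For the inductive step I would pick a leaf $v$ of $T$ with unique neighbor $u$, set $T'=T\setminus\{v\}$, and let $g(x)$ count edge-preserving maps $f\colon V(T')\to E^*$ with $f(u)=x$. Splitting off the edge $uv$ gives $n_T^*=\sum_{x,y}g(x)E^*(y)D_t(x,y)$, to which the bilinear estimate applies directly, producing $n_T^*=q^{-1}n_{T'}^*|E^*|+R$ with $|R|\le q^{(d-1)/2}||g||_2\sqrt{|E^*|}$. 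The main term is controlled by the inductive hypothesis on $n_{T'}^*$ combined with $|E^*|=|E|(1-O(1/\lambda))$; this reproduces, after multiplication by the new factor $|E|/q$, both the $\lambda^{-1}$ piece and the $\lambda^{(r-1)/2}q^{(d+1)/2}/|E|$ piece inherited from the previous level.

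The crux will be the $L^2$ control of $g$, and this is precisely where the truncation to $E^*$ pays off. Rooting $T'$ at $u$ and walking outward along its $r$ edges, each child must be an $E^*$-neighbor of a vertex already placed in $E^*$, and the degree bound caps the choices at $\lambda|E|/q$; hence $g(x)\le(\lambda|E|/q)^r$, and by induction
\begin{align*}
||g||_2^2\le(\lambda|E|/q)^r\,n_{T'}^*\lesssim \lambda^r|E|^{2r+1}/q^{2r}.
\end{align*}
Substituting into $|R|\le q^{(d-1)/2}||g||_2\sqrt{|E^*|}$ produces exactly the target error piece $\lambda^{r/2}q^{(d+1)/2}/|E|\cdot|E|^{r+2}/q^{r+1}$, closing the induction. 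The principal technical obstacle is constant tracking: the coefficient $4r$ at level $r$ must be promoted to $4(r+1)$ at level $r+1$, which I plan to arrange via the elementary inequality $4r+\sqrt{2\lambda}\le 4(r+1)\sqrt{\lambda}$ (valid once $\lambda\ge 1$), allowing the $\sqrt{\lambda}$ newly generated by the $R$-term to be absorbed into the growth of the exponent of $\lambda$ from $(r-1)/2$ to $r/2$.
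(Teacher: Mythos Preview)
Your proposal is correct and follows the same route as the paper: induct on the number of edges by stripping a leaf, apply the bilinear edge estimate (the paper's Theorem~\ref{CHIKR functional} / the functional form of \eqref{IRidentity}--\eqref{distanceremainderestimate}) to split off a main term plus an $L^2$ remainder, control the main term via the inductive hypothesis together with $|E\setminus E^*|\le 2|E|/\lambda$, and bound $\|g\|_2^2\le\|g\|_\infty\|g\|_1$ using the pointwise degree cap on $E^*$; your reading of the stated bound as carrying the factor $|E|^{r+1}/q^r$ is also what the paper intends. The only slip is indexing: with $T$ as in the lemma ($r$ edges), your $T'=T\setminus\{v\}$ has $r-1$ edges, so $g(x)\le(\lambda|E|/q)^{r-1}$ and the $R$-term already lands at exponent $\lambda^{(r-1)/2}$, which makes the constant bookkeeping simpler (just $(4r-4)\lambda^{(r-2)/2}+4\lambda^{(r-1)/2}\le 4r\lambda^{(r-1)/2}$ for $\lambda\ge1$) than the inequality you propose.
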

\begin{proof}[Proof of Theorem \ref{tree counting}]
Setting $\lambda=q^{\frac{2\varepsilon}{r+1}}$ in Lemma \ref{tree lemma} yields Theorem \ref{tree counting} as a corollary.
\end{proof}
\begin{proof}[Proof of Lemma \ref{tree lemma}]
We proceed by induction on $r$, noting for the case $r=1$ that 
\begin{align*}
\left|n_T^*-\frac{|E^*|^2}{q}\right|\leq 2q^{\frac{d-1}{2}}|E^*|
\end{align*}
Moreover $|E|-|E^*|\leq 2\frac{|E|}{\lambda}$, so 
\begin{align*}
\left|n_T^*-\frac{|E|^2}{q}\right|&\leq \left|n_T^*-\frac{|E^*|^2}{q}\right|+\frac{1}{q}\left||E|^2-|E^*|^2\right| \\
&\leq 2q^{\frac{d-1}{2}}|E|+\frac{4|E|^2}{q\lambda}
\end{align*}
Fix a degree 1 vertex $v$ of $T$ with unique neighbor $w$, and let $T'$ be the tree obtained from $T$ by deleting $v$, and the edge from $v$ to $w$.  We will assume the result holds for $T'$, and deduce that it holds for $T$.  For $x\in E^*$, let $T'(x)$ be the number of edge-preserving maps $\sigma:T'\to \mathcal{G}_t^*(E^*)$ with $\sigma(w)=x$.  Similarly to how we have defined $E^*$ and $G_t^*$, let $D_t^*(x,y)=D_t(x,y)$ in the dot-product case, and $D_t^*(x,y)=S_t(x-y)$ in the distance case.  Then we have
\begin{align*}
n_T^*&=\sum_{x,y\in \mathbb{F}_q^d}{E^*(y)D_t^*(x,y)T'(x)}
=q^{-1}||T'||_{1} |E^*|+R, \ \ \text{where} \nonumber \\
R&\leq 2q^{\frac{d-1}{2}}||T'||_{2}|E^*|^{1/2}
\end{align*}
By our inductive assumption, 
\begin{align*}
\left|||T'||_1-\frac{|E|^r}{q^{r-1}}\right|
\leq (4r-4)\frac{|E|^{r}}{q^{r-1}}\left(\lambda^{-1}+\lambda^{\frac{r-2}{2}}\frac{q^{\frac{d+1}{2}}}{|E|}\right)
\end{align*}
and in particular for $q$ sufficiently large we have $||T'||_1\leq 2\frac{|E|^r}{q^{r-1}}$.  Thus,
\begin{align*}
\left|q^{-1}||T'||_1|E^*|-\frac{|E|^{r+1}}{q^r}\right|
&\leq q^{-1}||T'||_1(|E|-|E^*|)+\left|q^{-1}||T'||_1|E|-\frac{|E|^{r+1}}{q^r}\right| \\
&\leq \frac{|E|^{r+1}}{q^r}\left((4r-2)\lambda^{-1}+(4r-4)\lambda^{\frac{r-2}{2}}\frac{q^{\frac{d+1}{2}}}{|E|}\right)
\end{align*}
So to prove the lemma, it only remains to bound $R$, i.e. to bound $||T'||_{2}$.  Let $D$ be the maximum degree of a vertex in $\mathcal{G}_t^*(E^*)$, noting that by the definition of $E^*$,
\begin{align*}
D=\max_{x\in E^*}\sum_y{E^*(y)D_t^*(x,y)}\leq \lambda\frac{|E|}{q}
\end{align*}
Also, trivially for any $x\in E^*$, $T'(x)\leq D^{r-1}$.  Therefore,
\begin{align*}
||T'||_{2}^2\leq \max_x{T'(x)}\cdot ||T'(x)||_{1}
\leq 2\left(\frac{\lambda|E|}{q}\right)^{r-1}\frac{|E|^r}{q^{r-1}}
=2\lambda^{r-1}\frac{|E|^{2r-1}}{q^{2r-2}}
\end{align*}
and thus
\begin{align*}
R\leq 4\lambda^{\frac{r-1}{2}}q^{\frac{d+1}{2}}\frac{|E|^{r}}{q^r},
\end{align*}
We conclude that
\begin{align*}
\left|n_T^*-\frac{|E|^{r+1}}{q^r}\right|
\leq 4r\frac{|E|^{r+1}}{q^r}\left(\lambda^{-1}+\lambda^{\frac{r-1}{2}}\frac{q^{\frac{d+1}{2}}}{|E|}\right)
\end{align*}
completing the proof of the lemma.
\end{proof}
\begin{corollary}
With the notation from Theorem \ref{tree counting}, if $|E|\geq q^{\frac{d+1}{2}+\varepsilon}$ then for sufficiently large $q$,
\begin{align*}
n_T^*
\leq 2\frac{|E|^{r+1}}{q^r}
\end{align*}
\end{corollary}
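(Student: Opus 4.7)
The plan is to deduce this corollary as an immediate consequence of Theorem \ref{tree counting}, which has just been established by applying Lemma \ref{tree lemma} with $\lambda = q^{2\varepsilon/(r+1)}$. Under the hypothesis $|E|\geq q^{(d+1)/2+\varepsilon}$, that theorem delivers
$$\left|n_T^* - \frac{|E|^{r+1}}{q^r}\right|\leq 8\frac{|E|^{r+1}}{q^r}\,q^{-\frac{2\varepsilon}{r+1}},$$
which is exactly the starting point needed.

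The only remaining step is a triangle inequality. Rearranging the two-sided estimate yields
$$n_T^*\leq \frac{|E|^{r+1}}{q^r}\left(1+8q^{-\frac{2\varepsilon}{r+1}}\right),$$
and since $\varepsilon>0$ is fixed and $r$ is a fixed integer, the factor $8q^{-2\varepsilon/(r+1)}$ tends to $0$ as $q\to\infty$. Therefore, once $q$ is large enough that $8q^{-2\varepsilon/(r+1)}\leq 1$, we conclude $n_T^*\leq 2|E|^{r+1}/q^r$, which is the claimed bound.

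There is essentially no obstacle to this argument; the corollary is recorded separately only to package a crude upper bound on $n_T^*$ in a form that is directly usable when estimating the contribution of degenerate cycles in the proof of Theorem \ref{nondegenerate}. The sole bookkeeping point is that the notation $n_T^*$ in this section refers to embeddings of $T$ into $\mathcal{G}_t^*(E^*)$, and this matches the count in Theorem \ref{tree counting} via the identification $E^* = E'$ furnished by the choice $\lambda = q^{2\varepsilon/(r+1)}$ made in the proof of that theorem.
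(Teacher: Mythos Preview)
Your proof is correct and matches the paper's intent: the corollary is stated without proof because it follows immediately from Theorem \ref{tree counting} by the triangle inequality and letting $q$ be large, exactly as you describe. Your remark identifying $n_T^*$ with the $n_T$ of Theorem \ref{tree counting} via $E^* = E'$ under $\lambda = q^{2\varepsilon/(r+1)}$ is also accurate.
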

\begin{proof}[Proof of Theorem \ref{nondegenerate}]
Let $E^*$ be defined as in Lemma \ref{tree lemma}, with $\lambda=q^{-\frac{2\varepsilon}{k-1}}$, $|E|\geq q^{\frac{d+1}{2}+\varepsilon}$, and
\begin{align*}
\varepsilon=\left\{\begin{array}{ll}
1-\frac{k-2}{k-1}+\delta & : n=2k \\
1-\frac{2k-3}{2k-1}+\delta & :n=2k+1
\end{array}
\right.
\end{align*}
Recall that the number of degenerate $n$-cycles in $\mathcal{G}_t^*(E^*)$ is bounded by $\sum_T{n_T^*G_T}$, where $G_T$ is the number of graphs on the same vertex set as $T$ for which $T$ is a spanning tree, and the sum is taken over trees with at most $n-1$ vertices.  Now, $T$ has $r$ edges while the $K_{r+1}$ has $\binom{r+1}{2}$ edges.  Since any subset of the edges in $K_{r+1}$ which are not in $T$ may be added to $T$ to form a graph whose spanning tree is $T$, we find that 
\begin{align*}
G_T=2^{\binom{r+1}{2}-r}
\end{align*}
Furthermore, it was shown by Cayley in 1889 that the number of trees on $r+1$ vertices is $(r+1)^{r-1}$.  Therefore, where $\mathcal{D}_k^*(E^*)$ is the number of degenerate cycles of length $n$ in $\mathcal{G}_t^*(E^*)$ and $\mathcal{N}_k^*(E^*)$ is the number of such non-degenerate cycles,
\begin{align*}
\mathcal{D}_n^*(E^*)
&\leq \sum_T{n_T^*G_T}
=\sum_{r=1}^{n-2}\sum_{|V|=r+1}{n_T^*G_T}
\leq c_n\sum_{r=1}^{n-2}{\frac{|E|^{r+1}}{q^r}} \nonumber \\
&=c_n\frac{|E|^2}{q}\cdot \frac{\frac{|E|^{n-2}}{q^{n-2}}-1}{\frac{|E|}{q}-1}\leq c_n\frac{|E|^{n-1}}{q^{n-2}},
\end{align*}
where $c_n=(n-1)^{n-3}\cdot 2^{\binom{n-1}{2}-n+3}$.  Therfore,
\begin{align*}
\left|\mathcal{N}_n^*(E^*)-\frac{|E|^n}{q^n}\right|
&\leq 
\left|C_n^*(E^*)-\frac{|E^*|^n}{q^n}\right|+q^{-n}(|E|^n-|E^*|^n)+\mathcal{D}_n^*(E^*) \\
&\leq K_n\frac{|E|^n}{q^n}q^{-\left(\frac{n}{2}-1\right)\delta}+nq^{-k}(|E|-|E^*|)|E|^{n-1}+c_n\frac{|E|^{n-1}}{q^{n-2}} \\
&\leq \frac{|E|^n}{q^n}\left(K_nq^{-\left(\frac{n}{2}-1\right)\delta}+2nq^{-\frac{2}{n-1}}+c_nq^{-\frac{d-3}{2}-\varepsilon}\right)
\end{align*}
Now, since 
\begin{align*}
\frac{|E|^{n}}{q^n}\left(1-\left(K_nq^{-\left(\frac{n}{2}-1\right)\delta}+2nq^{-\frac{2}{n-1}}+c_nq^{-\frac{d-3}{2}-\varepsilon}\right)\right) \leq  \mathcal{N}_n^*(E^*)\leq \mathcal{N}_n^*(E)\leq C_n^*(E)\leq \frac{|E|^{n}}{q^n}\left(1+K_nq^{-\left(\frac{n}{2}-1\right)\delta}\right)
\end{align*}
We obtain the desired bounds for $N_n^*(E)$, and establish the existence of non-degenerate $n$-cycles when $d\geq 3$.
\end{proof}

\newpage

\end{document}